\newcommand{\pathtotrunk}{./}
\newtheorem{fact}[prop]{Fact}
\newtheorem{theorem}[prop]{Theorem}
\title{Man and machine thinking about the smooth $4$-dimensional Poincar\'{e} conjecture.}
\author{Michael~Freedman}
\address{
}%
\email{michaelf@microsoft.com}
\author{Robert~Gompf}
\address{
}%
\email{gompf@math.utexas.edu}
\author{Scott~Morrison}
\address{
}%
\email{scott@tqft.net}
\author{Kevin~Walker}
\address{
}%
\email{kevin@canyon23.net}
\date{
  First edition: the mysterious future
  This edition: \today.
}
\keywords{Property R, Cappell-Shaneson spheres, Khovanov homology, s-invariant, Poincar\'{e} conjecture}
\begin{document}

\begin{abstract}
While topologists have had possession of possible counterexamples to the smooth $4$-dimensional Poincar\'{e} conjecture (SPC4) for over 30 years, until recently no invariant has existed which could potentially distinguish these examples from the standard $4$-sphere. Rasmussen's $s$-invariant, a slice obstruction within the general framework of Khovanov homology, changes this state of affairs. We studied a class of knots $K$ for which nonzero $s(K)$ would yield a counterexample to SPC4. Computations are extremely costly and we had only completed two tests for those $K$, with the computations showing that $s$ was $0$, when a landmark posting of Akbulut \cite{0907.0136} altered the terrain.  His posting, appearing only six days after our initial posting, proved that the family of ``Cappell--Shaneson'' homotopy spheres that we had geared up to study were in fact all standard.  The method we describe remains viable but will have to be applied to other examples.  Akbulut's work makes SPC4 seem more plausible, and in another section of this paper we explain that SPC4 is equivalent to an appropriate generalization of Property R (``in $S^3$, only an unknot can yield $S^1 \times S^2$ under surgery''). We hope that this observation, and the rich relations between Property R and ideas such as taut foliations, contact geometry, and Heegaard Floer homology, will encourage $3$-manifold topologists to look at SPC4.
\end{abstract}

\maketitle


\setcounter{tocdepth}{2}
\tableofcontents
\section{Introduction} \label{section_intro}

The smooth 4-dimensional Poincar{\'{e} conjecture (SPC4) is the ``last man standing'' among the great problems of classical geometric topology.\footnote{The 3-dimensional Schoenflies problem (Does a smooth 3-sphere imbedded in $\R^4$ bound a smooth 4-ball?) is a special case.  It asks if there are invertible homotopy spheres.}  Manifold topology has been transformed by contact with physics and geometry so much that few of the questions studied today would have been recognizable fifty years ago; SPC4 is the exception.  The ``conjecture'' can be formulated as:

\begin{conjecture}[SPC4] \label{conjecture_SPC4}
A smooth four dimensional manifold $\Sigma$ homeomorphic to the 4-sphere $S^4$ is actually diffeomorphic to it, $\Sigma = S^4$.
\end{conjecture}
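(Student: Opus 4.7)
The plan is not to attempt a direct proof of Conjecture~\ref{conjecture_SPC4} --- this is one of the deepest open problems of classical topology, and as the abstract already concedes, no proof appears to be in reach. Instead, I would pursue two complementary strategies, one aimed at \emph{disproving} SPC4 by exhibiting a counterexample, the other at \emph{proving} it by reducing it to a problem in 3-manifold topology. The abstract signals that both lines are taken up in the body of the paper.

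\textbf{Strategy 1: counterexample via the $s$-invariant.} For each homotopy 4-sphere $\Sigma$ not yet known to be standard, the plan is to write $\Sigma = B \cup_\phi B'$ with $B \cong B^4$ and $B'$ built from an explicit handle decomposition, and then to locate a knot $K \subset S^3 = \partial B$ which bounds a smoothly embedded disk in $\Sigma \smallsetminus \operatorname{int}(B)$. If the computed value $s(K)$ is nonzero, then Rasmussen's lower bound for the smooth slice genus in $B^4$ shows $K$ is not smoothly slice in $B^4$, whence $\Sigma \not\cong S^4$. First I would target the Cappell--Shaneson family as historically the most-promising candidates; after that, one moves on to Gluck twists of embedded $2$-spheres and other constructions.

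\textbf{Strategy 2: proof via generalized Property R.} In parallel, I would try to repackage SPC4 as a purely 3-manifold statement, in the spirit of Gabai's theorem that only an unknot in $S^3$ can yield $S^1 \times S^2$ by surgery. Specifically, a handle decomposition of a homotopy 4-sphere with a single 0-, 1-, 3-, and 4-handle reduces SPC4 to a statement about when a framed link presenting $\#_k (S^1 \times S^2)$ in $S^3$ must be isotopic to the standard 0-framed unlink. Having formulated a suitable ``generalized Property R'' of this form, one then tries to attack it with the toolkit from modern 3-manifold topology: taut foliations (as used by Gabai in the classical case), sutured manifold hierarchies, contact structures, and Heegaard Floer/monopole Floer invariants.

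\textbf{Main obstacles.} The two strategies face qualitatively different barriers. For Strategy~1 the bottleneck is computational: Khovanov homology grows exponentially in the crossing number, and the knots $K$ arising naturally from interesting $\Sigma$ tend to have very large crossing number, so each new $s$-computation is a major undertaking --- indeed, as the abstract notes, we completed only two such tests before Akbulut's work showed the Cappell--Shaneson spheres are standard, and so one must find new families of $\Sigma$ with tractable associated knots. For Strategy~2 the obstacle is conceptual: the generalized Property R statement is strictly stronger than Gabai's, and it is not clear that existing 3-manifold technology is powerful enough to close the gap. Genuine progress therefore seems to require either a substantial algorithmic/structural improvement in $s$-invariant computation, or a new geometric idea tying Floer-theoretic and foliation techniques to the combinatorics of Kirby diagrams of homotopy 4-spheres.
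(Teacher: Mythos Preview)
Your proposal is appropriate and matches the paper's approach. The statement is a \emph{conjecture}, not a theorem; the paper offers no proof of it, and you rightly do not attempt one. Your two strategies --- searching for a counterexample via Rasmussen's $s$-invariant applied to knots bounding disks in candidate homotopy 4-balls, and reformulating SPC4 as a generalized Property~R statement amenable to 3-manifold techniques --- are exactly the two lines the paper pursues (in Sections~\ref{section_history}--\ref{sec:bands} and Section~\ref{section_propR} respectively), and your assessment of the obstacles (computational cost of Khovanov homology on one side, the gap between Gabai's methods and the stronger link-theoretic statement on the other) is accurate.

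One small sharpening: in Strategy~2 you describe the reduction for a homotopy sphere with a \emph{single} handle of each index, which is the toy case the paper uses as motivation in the introduction. The actual equivalence the paper establishes (Proposition~\ref{prop_conj1_equiv_conj1'}) is for arbitrary handle decompositions with $p$ 1-handles and $p+q$ 2-handles, and the conjectural Property~R-type statement (Conjecture~\ref{conj_1'}) is that the corresponding link can be reduced to the empty diagram by Kirby moves (1)--(3). The paper also emphasizes (Remark~\ref{remark_conj1'_proof}) that moves increasing $p$ or $q$ are likely unavoidable, which is a subtlety your outline does not mention but which matters for any attempt to adapt Gabai's argument.
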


The opinion of researchers in this area tends to lean in the direction that the conjecture is {\em false}.  There are three reasons for this.  First, Donaldson theory and Seiberg-Witten theory produce a myriad of examples of multiple smooth structures on closed simply connected 4-manifolds (although all of these have second homology).  Second, there are several constructions which give potential counterexamples.  Only a small subset of these have been ``killed,'' that is, proved standard.  Third, the best known tool for constructing diffeomorphisms, the $h$-cobordism theorem, is broken, that is it fails for smooth $h$-cobordisms between 4-manifolds.\footnote{Perhaps in light of recent developments, one should say that Akbulut \cite{0907.0136} and even more recent work \cite{0908.1914} of the second author suggest that a direct handle-by-handle approach or possibly a combinatoric approach organized by the concept of ``broken Lefschetz fibrations'' might be fruitful.}

In the other direction, there are two strands of research that could provide a positive argument (that is, that the conjecture is {\em true}).  The first derives from a theorem of Gromov \cite[\S 0.3.C]{MR809718} which states that any symplectic structure on $\Sigma - \text{pt.}$ which is standard near the deleted point is actually symplectomorphic to $dp_1dq_1 + dp_2dq_2$ on $T^*\Real^2 = \Real^4$.  Perhaps punctured homotopy $4$-spheres can be given a symplectic structure standard at infinity. Because the existence of a symplectic structure is \emph{not} always preserved by homemorphism, the argument would have to be special to homotopy spheres.

A second possible avenue will be explained in section \ref{section_propR} of this paper.  We show that SPC4 is implied by an appropriate generalization of Gabai's Property R theorem \cite[Corollary 8.3]{MR910018}:
\begin{quote}
``In $S^3$, only an unknot can yield $S^1 \times S^2$ under surgery.''
\end{quote}
It should not be a huge surprise that some generalization of Property R implies SPC4, since if $\Sigma$ has a handle decomposition consisting of exactly one $k$-handle for each index $k=0,2,3,4$ and no handles of index $1$, then $\Sigma = S^4$.  To see this, let $N$ = $\partial(0,2\text{-handles})$ be the boundary after the $0$- and $2$-handles are attached.  In order to attach a $3$-handle (to the essential $2$-cycle), it is necessary that $N$ have the form $N= S^1 \times S^2 \# \Sigma^3$.  But for $\partial(0,2,3 \text{-handles})$ to be $S^3$ so that the $4$-handle can be attached, we see $N=S^1 \times S^2$.  By Property R, the $2$-handle must attach to the unknot, i.e. in a standard way.  Similarly, up to isotopy there is a unique $2$-sphere in $N$ for the $3$-handle to attach to. The core, $\text{pt.} \times S^2$, and co-core, $S^1 \times \text{pt.}$ meet in one point so the Morse Lemma cancels the $2$- and $3$-handles.  The result is the standard Morse function on $\Sigma$, showing $\Sigma = S^4$.  In fact, a Property R-like statement can be tailored to be equivalent to SPC4.  We hope that this observation will open SPC4 to 3-manifold specialists.

But let us suppose, again, that SPC4 is false.  One would expect a proof of this to calculate some invariant sensitive to smooth structure.  The Donaldson polynomial and their Seiberg-Witten analogs seem ill suited to homotopy spheres since they address how families of self-dual connections (or harmonic spinors obeying a quadratic constraint) specialize to $2$-cycles in the $4$-manifold.  We report here the results of a model large scale computer calculation of a different invariant which -- with the proper example in hand -- may have the power to distinguish certain homotopy $4$-spheres from the standard $4$-sphere.

We studied two homotopy $4$-spheres $\Sigma$ produced by Cappell and Shaneson in 1978 \cite{MR0413117} and extracted a local problem: the {\em slice problem} for certain knots $K$, of whether $K$ bounds an imbedded disk in $\frac{1}{2}\Real^4$. This can be answered affirmatively if $\Sigma = S^4$. We thus shifted attention from $4$-manifold invariants to knot invariants which give an obstruction to the slice problem. Our proposed invariant is exactly Rasmussen's \cite{math.GT/0402131} $s$-invariant associated to the Khovanov homology  \cite{MR1740682, MR1917056, MR2174270} of a knot $K$.  Half the absolute value $\frac{1}{2} |s(K)|$ of the $s$-invariant serves as a lower bound for the genus of smoothly embedded surfaces in $\frac{1}{2} \Real^4$ bounding $K \subset \Real^3$. In the usual formulation, the $s$-invariant is extra information beyond the Khovanov homology, and comes from the spectral sequence relating Khovanov homology and Lee's \cite{MR2173845} variation thereon. For some knots, however, including our $K$, it is possible to extract the $s$-invariant directly form the Khovanov homology.
The invariant $|s(K)|$ is an obstruction to the slice problem, but 766 hours of computer calculation show $s(K)=0$, yielding no information.  Akbulut's recent posting \cite{0907.0136} eliminates the best known subfamily of CS-homotopy spheres (indexed by the integers), by proving them to be standard. This is precisely the family we had geared up to compute, incidentally confirming our calculation that $s(K)=0$. At the end of section \ref{section_history} we give more information regarding how our method could be applied to other potential counterexamples to SPC4.

Localizing the problem to ``slicing a knot $K$'' is necessitated by the limits of present knowledge.  Perhaps current work on `blob homology' by two of us, Morrison and Walker, will allow a Khovanov homology skein module $A(\Sigma^4)$ to be defined, computed, and compared to $A(S^4)$.  Even if possible, exponential scaling will make the brute force study of examples nearly impossible.  Theoretical tools will be needed.  Since we do localize to a slice problem, the reader may wonder if the known localizations of gauge theory to slice obstructions, in particular the $\tau$-invariant, could be used in place of $s$. (The $\tau$-invariant also gives a lower bound, of $\frac{1}{2} \abs{\tau(K)}$, and when the $s$-invariant was discovered it was conjectured to coincide with $\tau$, on the basis of small examples, although this was later proved false in \cite{MR2405163}.) The answer however is no.  Gauge theory invariants will not see the difference between slicing in an exotic versus standard 4-ball.  The gauge theoretic lower bounds to $4$-ball genus are localizations of adjunction formulas (relative to special classes) and these, by standard neck stretch arguments are insensitive to the smooth structure near a single point. In particular, if $K$ is slice in any homotopy $4$-ball, then $\tau(K)=0$.

Khovanov homology is newer and less well understood than gauge theory.  The only definitions we know involve the coordinate structure of space -- in three dimensions, projections are used to define the homology groups and $s$-invariant, and in four dimensions, ``movies'' of projections establish what the $s$-invariant measures.  Thus, it is possible that Khovanov homology has the power to distinguish an exotic $4$-ball (which will have no smooth radius function with 3-sphere levels) from the standard one, a possibility too tempting to overlook. Although the first computer calculation yielded $s=0$, it may be possible to produce additional, computationally feasible, test cases.  If there are general reasons why Khovanov homology is insensitive to the smooth structure on a ball, they are presently unknown.

The authors would like to thank Dror Bar-Natan, Nathan Dunfield, Rob Kirby and Martin Scharlemann for interesting and useful conversations relating to this paper. Robert Gompf was partially supported by NSF grant DMS-0603958 during this work, and Michael Freedman, Scott Morrison and Kevin Walker were at Microsoft Station Q.

\section{Historical background and potential counterexamples} \label{section_history}

In 1976, both Cappell and Shaneson \cite{MR0413117} and Gordon \cite{MR0440561} studied ways to produce homotopy spheres $\Sigma$ by a two step process.  Start with a closed $3$-manifold $M$ and a self-diffeomorphism $\phi$ to produce a mapping torus $$\frac{M \times I}{\phi (M \times 1) \sim M \times 0}.$$  Then surger a cross-section circle; this often produces a homotopy sphere.  The two possible circle framings yield homotopy spheres which are related by the Gluck construction (described below) on the dual $2$-sphere.  Since the original interest was on producing a distinct pair of $2$-knots with identical complement, some effort was expended, particularly in \cite{MR0440561}, to achieve a stronger theorem by ensuring that $\Sigma$ was recognizably diffeomorphic to $S^4$, not just a homotopy sphere.

But quickly the emphasis shifted to the more exciting possibility that some of the homotopy spheres produced in this fashion might be exotic. The Cappell-Shaneson examples come from suitable self-diffeomorphisms $\phi$ of the $3$-torus. Up to obvious equivalences, there are finitely many such $\phi$ for each trace (and only one for each trace between $-4$ and 9 \cite{MR780575}). Thus most research has focused on the representative family $\Sigma_m$ determined by the matrices $$ A_m = \left| \begin{array}{ccc} 0 & 1 & 0 \\ 0 & 1 & 1 \\ 1 & 0 & m+1 \end{array} \right|, \qquad m \in \Z,$$ and the ``harder'' choice of framed $1$-surgery. (Aitchison and Rubenstein \cite{MR780575} showed that the other choice of framing yields $S^4$.) The study of such examples by explicit handle descriptions was initiated by Akbulut and Kirby \cite{MR816520}, who produced an elegant diagram of $\Sigma_0$ without $3$-handles.  Subsequently, one of us \cite{MR1081936} was able to show that $\Sigma_0 = S^4$ by introducing a $(2,3)$-handle pair into the Morse function.  (This $(2,3)$ pair has significance for generalizing Property R which we will discuss in section \ref{section_propR}.) The manifolds $\Sigma_m$, $m \neq 0$, remained mysterious until proven standard in \cite{0907.0136}.

It should be mentioned that ten or twenty years earlier, two other sources of homotopy spheres were known. At the end of this section we comment on adapting our approach to the study of these examples.  First, given a balanced presentation $P$, e.g. $\langle x,y\; | \; xyx=yxy, \; x^4=y^5 \rangle$, for the trivial group, let $C$ be the corresponding $2$-complex.  Embed $C$ into $\Real^5$ and let $\mathcal{N}(C)$ be a regular neighborhood.  The boundary $\partial \mathcal{N} = \Sigma$ is a homotopy sphere, uniquely determined by $P$.  If $P$ is Andrews-Curtis equivalent to the empty presentation, it is easy to see that $\Sigma = S^4$, otherwise the situation is unclear. However, various apparently nontrivial presentations, such as the one above, are now known to generate the standard 4-sphere, as a consequence of study of the Cappell-Shaneson examples \cite{MR1081936}, \cite{0907.0136}. The second source of examples is the {\em Gluck construction} \cite{MR0146807}. For any knotted 2-sphere in $S^4$, remove its tubular neighborhood $S^2\times D^2$ and reglue it by the nontrivial but homologically trivial diffeomorphism of $S^2\times S^1$, rotating $S^2$ once as we travel around $S^1$. The result is a homotopy sphere that is not known to be $S^4$ in general. However, it is known to be trivial for many 2-knots, such as twist-spun knots  \cite{MR0440561}, \cite{MR508892}, doubles of ribbon disks \cite{MR1707327}, and various 2-knots arising from the Cappell-Shaneson construction (those trivialized in  \cite{MR1081936},  \cite{0907.0136} and \cite{0908.1914}).

Returning to the family $\Sigma_m$, for thirty years these examples patiently awaited the development of an invariant that {\em might} separate them from $S^4$.  (They are now known to be standard \cite{0907.0136}, but we nevertheless explain our general approach in their context for it was in $\Sigma_{-1}$ and $\Sigma_1$ that we did our computations.)  As explained in the introduction, gauge theory was not up to the task. Recently, the Khovanov homology  link invariant \cite{MR1740682, MR1917056, MR2174270}, a categorification of the Jones polynomial, has presented itself as a possibility in the form of Rasmussen's $s$-invariant \cite{math.GT/0402131}.  The reason it is possible that this is the right tool to separate standard from exotic $4$-spheres (or equivalently $4$-balls) is that the definitions we have for $Kh$ and $s$ are coordinate intensive.  Projections are used to define $Kh$ and $s$ and ``movies'' of projections are used to prove the properties of $s$.  It should be admitted at the outset that a less combinatorial, more abstract formulation of $Kh$ -- perhaps analogous to Witten's \cite{MR990772} reformulation of the Jones polynomial -- might establish general properties of $Kh$ that would make it useless in detecting homotopy spheres.  However, nothing of this sort is known so we feel the problem of testing homotopy spheres against computer calculations of $s$ is irresistible.

The $s$-invariant is a lower bound to the $4$-ball genus of a knot $K$,
\begin{equation}\label{eqn:s(K)}
\frac{1}{2} |s(K)| \leq \text{genus}_4(K)
\end{equation}
Let us explain how we localize the problem by showing that the condition $\Sigma_1 = S^4$ implies, for a certain knot $K: S^1 \hookrightarrow S^3$, that $K$ is slice, i.e. has $\text{genus}_4(K)=0$.  The handle structure (drawn in detail in section \ref{sec:bands}) of $\Sigma_1$ consists of:  0-handle $\cup$ two (1-handles) $\cup$ two (2-handles) $\cup$ (4-handle).  Since there are no exotic diffeomorphisms of $\partial(4\text{-handle})=S^3$, we may without loss of generality pull off the $4$-handle and ask if the remaining homotopy ball $B_1$ (with boundary equal to $S^3$) is standard, that is ``is $B_1 = B^4$?''  Recall some terminology: a four dimensional $k$-handle is a pair $(B^k \times B^{4-k}, \partial B^k \times B^{4-k})$ which is to be glued onto the boundary of the union of lower index handles along $\partial B^k \times B^{4-k}$.  $B^k \times 0$ is called the ``core'' and $0\times B^{4-k}$ is called the ``co-core.''

The co-cores of the two $2$-handles in $B_1$ are a pair of disjoint disks bounding a two component link $L \subset \partial B_1 = S^3$.  These disks show that $L$ is ``slice'' in $B_1$.  But if $B_1 \neq B^4$ then there is a possibility that $L$ is {\em not} a slice link in the conventional sense of bounding disjoint imbedded disks in $B^4$.  It is this possibility that we study via the $s$ invariant.

Now $L$ is a two component link and the $s$ invariant is defined for knots; is this a problem?  Actually, there is a generalization of $s$ to links\footnote{See \cite{MR2462446} for an integer valued invariant; there is also a stronger version, unpublished. Neither is immediately computable by the method described in \S \ref{sec:calculations}.} so the strongest approach would be to compute the ``generalized $s$'' of $L$.  Unfortunately, $L$ is well beyond the reach of any computer (unless a better algorithm is discovered).  The picture (Figures \ref{fig:gompf-1} and \ref{fig:link}) we draw for $L$ has 222 crossings and (more importantly) $\text{girth} \approx 24$.

Any knot $K_b$ obtained from $L$ by joining its two components by a rectangular band $b$ will bound an embedded disk in $B_1$ (the band sum of the pair of disks with boundary $L$).  In this sense, $K_b$ is slice in $B_1$ but again might fail to be slice in $B^4$.  In a sense, calculating $s(K_b)$, for any band $b$, provides information: if some $s(K_b) \neq 0$, then $B_1 \neq B^4$ as $K_b$ is slice in $B_1$ but not in $B^4$.  Although information may be lost in passing from $L$ to $K_b$, our only hope seems to be to seek out bands so that $K_b$ is small enough to compute $s$.

In the end we found three promising bands $b_1$, $b_2$, and $b_3$ so that the resulting knots $K_1$, $K_2$, and $K_3$ satisfy:
\begin{center}
\begin{tabular}{|c||c|c|c|}
  \cline{1-4}
  \; & apparent crossing $\#$ & apparent girth & comment \\
  \cline{1-4}
  $K_1$ & 67 & 14 & ribbon \\
  \cline{1-4}
  $K_2$ & 78 & 14 & \; \\
  \cline{1-4}
  $K_3$ & 86 & 16 & \; \\
  \cline{1-4}
\end{tabular}
\end{center}

The first knot $K_1$ is useless to us since ribbon knots are slice.  We ultimately computed (766 hours on a dual core AMD Opteron 285 with $32$gb of RAM) that $s(K_2)=0$, so unfortunately it too failed to distinguish $B_1$ from $B^4$.  The last knot $K_3$ appears to require an order of magnitude more space and time to compute.  The calculation was discontinued when Akbulut proved $B_m = B^4$ for all $m$. Shortly thereafter, Nathan Dunfield contacted us with a better presentation of $K_3$, see \S \ref{sec:K3}, which would probably make the calculation (indeed, of the full Khovanov homology, not just the $s$-invariant) feasible.

Our other, shorter calculation, done in $B_{-1}$ is briefly discussed in section \ref{sec:minus1}.

To summarize, suppose $B'$, $\partial B' = S^3$, is a homotopy 4-ball consisting of a 0-handle, $k$ 1-handles and $k$ 2-handles.  Suppose $K \subset S^3$ is the result of band summing the $k$ cocore circles into a single knot:

\begin{fact} \label{fact_1}
If $s(K) \neq 0$ then $B' \neq B^4$ and the SPC4 is false.
\end{fact}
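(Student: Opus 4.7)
The plan is to interpret the handle data geometrically, exhibit $K$ as slice in $B'$, and then apply Rasmussen's inequality \eqref{eqn:s(K)} contrapositively; the conclusion about SPC4 follows by capping off.

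First I would unpack the hypothesis. The cocores of the $k$ two-handles of $B'$ are $k$ disjoint, properly embedded disks $D_{1},\dots,D_{k}$ in $B'$, and their boundary circles form a $k$-component link $L \subset \partial B' = S^{3}$. The knot $K$ is built from $L$ by attaching $k-1$ rectangular bands in $S^{3}$; pushing a small collar of each band into $B'$ via $S^{3}\times[0,\varepsilon)\hookrightarrow B'$ and gluing the thickened bands onto the $D_{i}$ fuses the cocores into a single, properly embedded disk $D \subset B'$ with $\partial D = K$. Thus $K$ is smoothly slice in $B'$.

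Next, suppose for contradiction that $B'$ is diffeomorphic to the standard 4-ball $B^{4}$. Under such a diffeomorphism $D$ transports to a smoothly embedded disk in $B^{4}$ bounded by $K$, so $\mathrm{genus}_{4}(K)=0$. Rasmussen's bound \eqref{eqn:s(K)} then forces $|s(K)|\le 0$, contradicting $s(K)\ne 0$. This establishes $B'\ne B^{4}$.

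Finally, to derive the failure of SPC4, attach a $4$-handle to $B'$ along $\mathrm{id}_{S^{3}}$ to form the smooth homotopy 4-sphere $\Sigma=B'\cup_{\mathrm{id}}B^{4}$. If SPC4 held, then $\Sigma$ would be diffeomorphic to $S^{4}$, and the four-dimensional smooth Schoenflies conjecture (a special case of SPC4, per the footnote in \S\ref{section_intro}) would imply that the complement of the attached $B^{4}$ inside $S^{4}$, namely $B'$, is itself a standard 4-ball, contradicting the previous paragraph. Hence SPC4 must fail.

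I do not foresee a real obstacle: the handle-theoretic construction of the slice disk is routine, and the $s$-invariant machinery is imported wholesale from Rasmussen. The one subtlety worth emphasizing is the last step, where exhibiting an exotic homotopy 4-ball suffices to disprove SPC4 only because 4D smooth Schoenflies is absorbed into SPC4; otherwise one would only be disproving a formally weaker conjecture.
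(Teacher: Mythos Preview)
Your proof is correct and follows the same line as the paper, which treats Fact~\ref{fact_1} as evident from the surrounding discussion (the displayed proof after the two Facts is for Fact~\ref{fact_2} only): the band-summed cocores give a slice disk for $K$ in $B'$, and Rasmussen's inequality~\eqref{eqn:s(K)} rules out $B'\cong B^4$.

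One minor point on the final step: invoking Schoenflies via SPC4 is valid but slightly roundabout. The paper's implicit route (see the paragraph in \S\ref{section_history} about ``pulling off the $4$-handle'') is that by Cerf there are no exotic diffeomorphisms of $S^3$, so $\Sigma=B'\cup D^4$ is well defined, and by the Palais--Cerf disc theorem any smoothly embedded $4$-ball in $S^4$ has standard complement; hence $\Sigma\cong S^4$ already forces $B'\cong B^4$ without a second appeal to SPC4.
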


\begin{fact} \label{fact_2}
If $s(K) \neq 0$, then the Andrews-Curtis conjecture is false for the $k$ generator, $k$ relation presentation $P$ of the trivial group given by the handle decomposition of $B'$.
\end{fact}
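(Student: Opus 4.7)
The plan is to argue by contrapositive, invoking Fact \ref{fact_1} together with the classical dictionary between Andrews--Curtis moves on a balanced presentation and handle moves on the corresponding $4$-manifold. Suppose, for the sake of contradiction, that the Andrews--Curtis conjecture holds for the presentation $P$ arising from the handle decomposition of $B'$; we shall deduce $B' \cong B^4$, contradicting Fact \ref{fact_1}.

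The dictionary is as follows. The decomposition (one $0$-handle, $k$ $1$-handles, $k$ $2$-handles) yields $P = \langle x_1,\ldots,x_k \mid r_1,\ldots,r_k\rangle$, where the $x_i$ record the $1$-handles and $r_j$ is obtained by tracking how the $j$th $2$-handle's attaching circle traverses the $1$-handles. Each elementary Andrews--Curtis move is realized geometrically: inversion of a relator reverses the orientation of a $2$-handle attaching circle; conjugation $r_j \mapsto w r_j w^{-1}$ is an isotopy of that attaching circle in the boundary of the $1$-skeleton; multiplication $r_j \mapsto r_j r_\ell$ is a handle slide of the $j$th $2$-handle over the $\ell$th; and the stabilization move (adjoining a trivial generator together with the relator equating it to the identity) corresponds to introducing a cancelling $1$/$2$-handle pair. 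All of these handle operations preserve the diffeomorphism type of $B'$.

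By hypothesis, some finite sequence of Andrews--Curtis moves carries $P$ to the empty presentation. Applying the corresponding handle moves to $B'$ eventually eliminates every $1$- and $2$-handle, leaving a decomposition consisting of the $0$-handle alone. Hence $B' \cong B^4$, the desired contradiction.

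The only substantive input is the completeness of the Andrews--Curtis/handle-move dictionary, i.e., verifying that each AC move is realized by handle operations fixing the diffeomorphism type of the ambient $4$-manifold. This is a standard piece of Kirby calculus, so the ``main obstacle'' here is bookkeeping rather than any new geometric idea; the theorem is essentially a repackaging of Fact \ref{fact_1} under this well-known translation.
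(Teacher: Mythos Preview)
Your argument has a genuine gap at the destabilization step. The dictionary you invoke runs reliably in only one direction in dimension $4$: handle moves on $B'$ induce Andrews--Curtis moves on $P$, but it is \emph{not} a standard fact that every AC move lifts to a $4$-dimensional handle move. The problematic move is destabilization --- deleting a generator $x_i$ together with a relator reading $x_i$. Realizing this geometrically means cancelling a $1$/$2$-handle pair, which requires the attaching circle of the $2$-handle to meet the belt $2$-sphere of the $1$-handle transversely in a single point. The algebraic hypothesis gives only algebraic intersection number one; in the $3$-dimensional boundary $\#_k\, S^1\times S^2$ there is no general mechanism to remove excess geometric intersections while respecting the other attaching circles and their framings. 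Consequently the implication ``$P$ AC-trivial $\Rightarrow B'\cong B^4$'' that you need is not known, and is certainly not ``a standard piece of Kirby calculus.''

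This is exactly why the paper passes to dimension $5$. The handle structure on $B'$ stabilizes to one on $B'\times I$ with the same presentation, and in five dimensions general position does allow every AC move, including destabilization, to be covered by a handle move. One obtains $B'\times I\cong B^5$, hence $\mathrm{Double}(B')=\partial(B'\times I)\cong S^4$ and thus $B'\mathbin{\#}(\overline{B'}\cup D^4)\cong B^4$. Note this does \emph{not} yield $B'\cong B^4$; but the weaker conclusion suffices, since the slice disk for $K$ lying in $B'$ survives into the connected sum and so exhibits $K$ as slice in the standard $B^4$, forcing $s(K)=0$. Your appeal to Fact~\ref{fact_1} would require the stronger, unavailable conclusion.
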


\begin{proof}
The handle structure of $B'$ defines the presentation $P$ of the trivial group.  The handle structure on $B'$ stabilizes to a handle structure on $B' \times I$ also with $k$ 1-handles and $k$ 2-handles, giving the identical presentation $P$.  If $P$ is Andrews-Curtis equivalent to the empty presentation, then there is no {\em geometric} obstruction in 5-dimensions to covering the AC moves with handle slides (and births/deaths of (1-handle, 2-handle) pairs).  This means that $B' \times I = B^5$.  But then $$\text{Double}(B') = (B' \cup D^4) \# (\bbar{B'} \cup D^4) = \partial (B' \times I) = \partial B^5 = S^4.$$

This implies $B' \# (\bbar{B'} \cup D^4) = B^4$.  Since $K$ bounds a disk $\Delta$ imbedded in $B'$, which we may assume misses the region of the connected sum, $K$ also bounds $\Delta' \subset B' \# (\bbar{B'} \cup D^4) = B^4$.  But $s(K) \neq 0$ implies that there is no imbedded disk $\Delta' \subset B^4$ with $\partial \Delta' = K$.  This contradiction shows that $P$ cannot be AC equivalent to the empty presentation.
\end{proof}

The examples $\Sigma_m$ are now all known to be trivial \cite{0907.0136}, and the second author has found a simpler and more conceptual proof that an even larger family of Cappell-Shaneson homotopy 4-spheres is standard \cite{0908.1914}. The latter method may eventually show that all Cappell-Shaneson spheres are standard, raising the possibility that SPC4 may actually be true. In any case, none of the remaining  Cappell-Shaneson spheres are currently known to admit handle presentations without 3-handles. There is one remaining source of interesting homotopy spheres without 3-handles known to the authors, originating in Melvin's thesis \cite{melvin-thesis}: If a knotted 2-sphere in $S^4$ admits a height function with at most two local minima, then such an example arises by Gluck construction. (A simpler proof from \cite{MR1094545} also appears as Exercise~6.2.12(d) of \cite{MR1707327}, along with an algorithm for constructing the dual link diagram without 1-handles.) As discussed previously, many knots are known only to produce $S^4$ by Gluck construction, so to proceed in this direction, one should first find a clever way to generate 2-knots that seem likely to be interesting. On the other hand, our method can in principle be used in the presence of 3-handles, by locating a knot bounding a surface of some genus in a homotopy ball, then trying to prove that this genus cannot be realized in $B^4$.

\paragraph{Summary:} The existence of any non-slice knot (or link) $L$ in the boundary 3-sphere of a homotopy ball with an equal number of 1 and 2-handles, $L$ built from the 2-handle co-core boundaries by attaching a forest (disjoint collection of trees) of bands implies the failure of both SPC4 and AC.  Khovanov homology (or its variants categorifying the $\mathfrak{sl}_3$ polynomial \cite{MR2100691,MR2336253, math.GT/0612754} or $\mathfrak{sl}_n$  polynomials \cite{MR2391017,math.QA/0505056, math.GT/0612406}) may hold promise for establishing that a link $L$ is {\em not slice}.  More generally, if 3-handles are present in the handle decomposition there is still a vast array of knots and links which are seen to bound a system of surfaces of known genus. Inequalities such as \ref{eqn:s(K)} can then, in principle, show that the ambient space holding these surfaces is not the standard 4-ball.  On the other hand, the very steep (super exponential?) escalation of computational costs, both in time and space, for computing such invariants may limit such explorations.  Because computation may be limited to girth $\leq 14$ and \#~crossings $\leq 90$, the correct strategy may be to simplify the boundary knot (or link) with additional bands.  That is, to {\em not} necessarily seek to produce the boundary $K$ of a surface in a homotopy ball of the smallest possible genus.  The disadvantage of such an approach is that to detect a homotopy ball one would have to find $|s(K)| >> 0$; the advantage is that with additional bands, many knots could be generated in a size range, say girth $\leq 12$, crossings $\leq 60$, where each calculation of $s$ could be done in tens of seconds.

\section{Generalizing Property R} \label{section_propR}

Property R is now identified with Gabai's theorem \cite{MR910018}.

\begin{theorem}[Property R] \label{theorem_propR}
If surgery on $K\subset S^3$ is $S^1 \times S^2$ then $K$ is the unknot.
\end{theorem}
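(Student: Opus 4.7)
The plan is to reduce Property R to the statement that the Seifert genus $g(K)$ is zero, since a knot that bounds an embedded disk in $S^3$ is by definition unknotted. Set $M_K := S^3 \setminus \nu(K)$, and let $M$ denote the result of $0$-framed surgery on $K$, so by hypothesis $M \cong S^1 \times S^2$. Choose a minimal-genus Seifert surface $F \subset M_K$, and cap $\partial F$ off with the meridional disk of the surgery solid torus to obtain a closed oriented surface $\hat F \subset M$ of genus $g := g(K)$ generating $H_2(M;\Z) \cong \Z$. The entire task is to force $\hat F$ to be a $2$-sphere.

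The natural framework is the Thurston norm. By the definition of $g(K)$, the surface $F$ is norm-minimizing in $M_K$, and I would like to promote it to a leaf of a taut foliation of $M$. This is exactly the content of Gabai's foliation-construction theorem, the principal output of his series on foliations and the topology of $3$-manifolds: any norm-minimizing surface in an irreducible Haken $3$-manifold can be realized as a union of leaves of a transversely oriented taut foliation. For the present application, one has to choose the construction on $M_K$ so that the resulting foliation meets $\partial M_K$ in parallel longitudes; it then extends across the $0$-framed surgery solid torus by the standard foliation with meridional-disk leaves, yielding a taut foliation $\mathcal{F}$ of $M \cong S^1 \times S^2$ in which $\hat F$ is a leaf.

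With $\mathcal{F}$ in hand, Novikov's theorem implies that every leaf is incompressible (or essential, in the case of sphere leaves). Since $S^1 \times S^2$ contains no closed incompressible surface of positive genus, the leaf $\hat F$ must be a $2$-sphere. Hence $g(K) = 0$, the Seifert surface $F$ is a disk, and $K$ is the unknot.

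The hard part, and the essential content of the theorem, is the middle paragraph: the construction of $\mathcal{F}$. The entire apparatus of sutured manifold theory is required. One defines sutured manifold decompositions, proves that the taut sutured manifold obtained by cutting $M_K$ along $F$ admits a finite hierarchy terminating in a product sutured manifold, and then integrates the hierarchy backwards to build the foliation. The most delicate point for the surgery application is boundary control: the successive decompositions must be arranged so that the terminal foliation meets $\partial M_K$ transversely in parallel longitudes, so that extending across the surgery solid torus does not introduce a Reeb component (which would destroy tautness). It is precisely this boundary-control problem that defeats elementary approaches and that forces the full power of sutured hierarchies.
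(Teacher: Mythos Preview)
The paper does not prove this theorem at all: it is stated as Gabai's theorem with a citation to \cite{MR910018}, followed only by the remark that Gabai actually proved the stronger statement that any spherical class in the surgered manifold forces $K$ to be trivial. There is no proof environment in the paper for this result.

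Your outline is a fair sketch of Gabai's actual argument, and the honest acknowledgment that the middle paragraph conceals the entire sutured-manifold machinery is appropriate. One small correction of emphasis: the cleanest way to finish is not via ``$S^1\times S^2$ contains no closed incompressible surface of positive genus'' (true, but you then have to worry about whether Novikov's $\pi_1$-injectivity statement applies in a reducible manifold), but rather via Rosenberg's theorem that a closed orientable $3$-manifold admitting a taut foliation with no sphere leaves is irreducible. Since $S^1\times S^2$ is reducible, any taut foliation of it must have a sphere leaf; if $g(K)>0$ the capped-off Seifert surface $\hat F$ is not a sphere and the foliation built from Gabai's construction has no sphere leaves, giving the contradiction directly. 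This is closer to how Gabai phrases Corollary~8.3 in \cite{MR910018}.
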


Gabai actually proved that any spherical class in the surgered manifold would imply that $K$ is trivial.  This theorem has had tremendous importance in Floer theory and contact geometry, and has close relations to Property P (if surgery on $K$ yields $S^3$ then $K$ unknotted).  Its proof showcased taut foliation and sutured manifold techniques and is central to 3-manifold topology.  We hope these powerful 3-dimensional methods might be imported to study the SPC4.  In fact, this section will describe some possible generalizations of Property R, one of which is equivalent to SPC4.  Unfortunately, these generalizations do not statically discuss a single knot or link but rather address the consequences of some set of ``moves,'' so a direct generalization of Gabai's proof does not seem likely.

To relate Property R to SPC4, we need to recall a 4-dimensional version of Kirby's ``calculus'' theorem \cite{MR0467753} (see also \cite{MR1707327}).  First, some notation:  A connected 4-manifold $M$ has a handle decomposition with one 0-handle whose boundary is the ``blackboard.''  The 1-handles are drawn (assuming orientability and following Akbulut) as an unlink with a dot on each component; 0-surgery on this $p$-component unlink is the boundary $S_p=\# _{p \text{ copies }} S^1 \times S^2$ of $(\text{0-handle})\cup p (\text{1-handles})$. (By convention, $S_0=S^3$.) The 2-handles are drawn as a framed link in the complement of the dotted unlink. Each framing is represented by an integer, namely the linking number of the component with its pushoff via the framing (which is also the coefficient of the resulting surgery on the boundary 3-manifold, measured in the background $S^3$). Assuming $M$ is closed, the 3- and 4-handles comprise a regular neighborhood of a wedge of $q$ circles in $M$ that attaches in essentially a unique way to the boundary $S_q$ of the union of 0-, 1-, and 2-handles. Thus, we need not bother to keep track of the $q$ 3-handles and 4-handle.

\begin{theorem} \label{thm_kirby}
Let $M$ and $M'$ be 4-manifolds given as above by links $L$ and $L'$, respectively, with each link component dotted or framed. If $M$ and $M'$ are diffeomorphic (preserving orientations) then $L$ and $L'$ are related by compositions of the moves in Figure~\ref{kirbymoves}.
\begin{figure}[!htpb]
\labellist \Large\hair 2pt

  \pinlabel $\eset$ at 80 100
  \pinlabel $\eset$ at 80 165
  \pinlabel $(1)$ at -60 165
  \pinlabel $(2)$ at -60 100
  \pinlabel $(3)a,b,c$ at -60 35

\endlabellist
\centering
\includegraphics[scale=0.6]{\pathtodiagrams john/kirbymoves}
\caption{(1) birth/death of a 1-handle, 2-handle pair, (2) birth/death of a 2-handle, 3-handle pair (3-handle omitted), (3) handle slides.} \label{kirbymoves}
\end{figure}
Here, $n$ can be any integer, so the framing in (1) is arbitrary. However, we can require $n$ to be zero (so the framing is tangent to the obvious spanning disk) if (a) both $L$ and $L'$ have a component with odd framing or (b) all framings are even and $H_1(M;\Z _2)=0$ (or more generally if the given diffeomorphism preserves the induced spin structures).

\end{theorem}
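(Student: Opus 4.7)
The plan is to reduce Theorem~\ref{thm_kirby} to Cerf's theory of generic 1-parameter families of Morse functions, combined with the Laudenbach--Po\'enaru uniqueness for attaching the top handles of a closed 4-manifold. I would first represent $M$ and $M'$ by self-indexing Morse functions agreeing with their given handle decompositions, transport one of them across the given orientation-preserving diffeomorphism to get two Morse functions $f_0,f_1\colon M\to\R$, and join them by a generic path $f_t$, $t\in[0,1]$. Cerf's theorem guarantees that the only non-Morse moments along such a generic path are birth--death singularities (a cancelling pair of critical points of consecutive indices appearing or disappearing) and independent crossings of critical values of distinct critical points; everywhere else $f_t$ is Morse and gives a valid handle decomposition of $M$.

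Because our diagrams record only the $0$-, $1$-, and $2$-handles, I need to know that rearrangements among the $3$- and $4$-handles do not force extra moves. This is precisely the content of Laudenbach--Po\'enaru: for a closed $4$-manifold, the attaching data of the $3$- and $4$-handles are determined up to diffeomorphism by the $0$-$1$-$2$-handlebody and its boundary $S_q=\#_q S^1\times S^2$. Hence the top handles can be omitted from the diagram, and any evolution they undergo along $f_t$ is automatically invisible.

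The core of the argument is then a dictionary translating each event along $f_t$ into a diagrammatic move. A birth/death of a cancelling $(1,2)$-pair produces or destroys a dotted unknot together with a $2$-handle running geometrically once over it with some integer framing $n$, which is exactly move~(1). A birth/death of a cancelling $(2,3)$-pair produces or destroys a $0$-framed meridional unknot split from the rest of the diagram, the $0$-framing being forced by the Morse-cancellation geometry in $\#_q S^1\times S^2$; this is move~(2). A crossing of critical values translates to a handle slide between the two handles involved, and the three subcases of move~(3) correspond to the three possible pairs of indices ($1$ over $1$, $2$ over $1$, $2$ over $2$). This establishes the theorem with $n$ arbitrary in move~(1).

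The most delicate point is the refinement allowing $n$ to be taken to be $0$. The framing of a newly-born $2$-handle can be changed by subsequent handle slides of type~(3) over other $2$-handles: sliding over a $2$-handle with framing $f$ modifies the framing by $f+2m$ for some integer $m\in\Z$ depending on the chosen slide path. In case~(a), the odd-framed component supplies the parity flip needed to drive $n$ to $0$. In case~(b), the all-even framings define a spin structure on $M$, unique because $H_1(M;\Z_2)=0$ (or given by the hypothesis that the diffeomorphism preserves spin structures), and the Cerf family can be chosen compatibly with this spin structure; then every birth introduces an even-framed $2$-handle, which can be brought to $n=0$ by sliding only over even-framed handles (each such slide producing only an even framing change). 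The main obstacle here is the spin-structure bookkeeping in case~(b): one must verify that a spin-preserving diffeomorphism can in fact be realized by a Cerf family in which every birth respects the spin structure, and that parity-preserving slides are genuinely enough to reach $n=0$. The remainder is the routine translation between Cerf-theoretic events and the diagrammatic handle calculus.
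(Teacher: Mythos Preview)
Your overall strategy---Cerf theory plus the Laudenbach--Po\'enaru uniqueness for the $3$- and $4$-handles, followed by a dictionary translating births/deaths and level crossings into moves (1)--(3)---is exactly the paper's approach, and the paper dispatches it just as briefly as you do.

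The one place where your argument has a real gap is case~(a) of the $n=0$ refinement. You write that ``the odd-framed component supplies the parity flip needed to drive $n$ to $0$,'' but this only works if an odd-framed $2$-handle is present \emph{at the moment} a $(1,2)$-pair is born with odd $n$. The hypothesis is merely that $L$ and $L'$ each have an odd-framed component; at an intermediate stage of the Cerf path that component may well have been slid to even framing or cancelled entirely, so there need be nothing odd to slide over. The paper handles this with a specific trick you are missing: since parity can be flipped at the endpoints, first perform move~(1) with $n=1$ (realized via $n=0$ plus slides) in both $L$ and $L'$ to introduce an extra odd-framed Hopf link, and carry this Hopf link untouched through the entire computation. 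Now an odd-framed component is guaranteed at every stage, so any odd-$n$ birth can be converted to $n=0$ by sliding over it. Without this persistent odd component your parity argument does not go through.

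Your treatment of case~(b) matches the paper's: the even-framing hypothesis determines a spin structure, uniqueness (from $H_1(M;\Z_2)=0$) forces the diffeomorphism to preserve it, and the Cerf path can then be run respecting the spin structure so that all framings stay even throughout.
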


The three versions, $a,b,c$ of (3) deserve comment.  1-handles are often represented (cf. \cite{MR1707327}) by pairs of deleted balls whose boundaries are to be glued across a mirror.  The separating sphere $S$ has one hemisphere realized as the disk spanning \rlap{$^\bullet$}$\bigcirc$ in our notation and the other is present after the longitude is filled in by surgery (Figure~\ref{reflglue}).

\begin{figure}[htpb]
\noop{
\labellist \small\hair 2pt

  \pinlabel $S$ at 25 85
  \pinlabel $\text{glue by reflection}$ at 280 60

\endlabellist
}
\centering
\includegraphics[scale=0.7]{\pathtodiagrams john/reflglue}
\caption{} \label{reflglue}
\end{figure}

The obvious rule for sliding 1-handles on the deleted ball representation becomes band summing of one dotted circle with a parallel copy of the other, where the band is disjoint from a fixed collection of spanning disks for the dotted circles.  This is case $(a)$ of 1-handle slides.  (Exercise: the dotted circle representation of 1-handle slides is contravariant.)  Case $(b)$ is sliding a 2-handle (undotted circle) over a dotted circle.  Here, there is no restriction on the band.  The slide is nothing but an isotopy of the attaching region of the 2-handle in the boundary after the 1-handles are attached. (To compare the two notations, it is helpful to keep track of the arc along which we are to bring the two balls together to make a dotted unknot. Then these slides occur as attaching circles isotope through the connecting arc.) In Figure \ref{slide} we show an example in both notations.

\begin{figure}[!htpb]
\centering
\includegraphics[scale=0.85]{\pathtodiagrams john/slide}
\caption{A handle slide of the trefoil over the triangle is shown in the two different notations.} \label{slide}
\end{figure}

Finally, $(c)$ is the familiar sliding of one 2-handle over another. Recall that when we slide one component over another, the former changes its framing coefficient to the sum of the two coefficients plus twice the linking number of the components (oriented so that the two parallel curves point in the same direction). The same rule applies in $3(b)$ where the dotted circle has coefficient 0.

\begin{proof}[Proof of Theorem \ref{thm_kirby}.]
 This is almost implicit in \cite{MR0467753}. By Cerf theory, the two handle decompositions are related by handle moves, and 0- and 4-handles need not be created. The remaining moves translate into (1) -- (3) (arbitrary $n$). Only the parity of $n$ is significant, since it changes by 2 when the framed circle is slid over the dotted circle in (1) (move $3(b)$). Similarly, moves $3(a,b)$ allow us to assume the framed circle in (1) is knotted and linked with other components  (i.e., the introduction of ``circumcision pairs'' \cite{MR1456165} is redundant in this setting, see Figure \ref{circpairs}). If $L$ has an odd-framed component $K$, we can now change the parity of $n$ in move (1) by sliding the $n$-framed unknot over $K$ and simplifying. If both $L$ and $L'$ have odd-framed components, it now suffices to take $n=0$ in each move (1). (Use the above procedure to generate move (1) with $n=1$ in both diagrams $L$ and $L'$. We now have an extra Hopf link with an odd framing throughout the computation, so we can generate odd-parity moves (1) from the $n=0$ case as needed.) If all framings of $L$ and $L'$ are even, then the diagrams determine spin structures on $M$ and $M'$. If $H_1(M;\Z _2)=0$ then the spin structure on $M$ is unique, so the given equivalence preserves spin structures. This latter condition allows us to translate the Cerf theory into diagrams respecting the spin structure, so all framings are even throughout. (Note that the condition is necessary since odd framings cannot be created from an even diagram without an odd move (1).)
 \end{proof}

\begin{figure}[!htpb]
\noop{
\labellist \small\hair 2pt

  \pinlabel $\text{In (1)}$ at -20 75
  \pinlabel $\implies$ at 165 75

\endlabellist
}
\centering
\includegraphics[scale=0.7]{\pathtodiagrams john/circpairs}
\caption{Circumcision pairs \cite{MR1456165} are redundant in the presence of move  $3(b)$.} \label{circpairs}
\end{figure}

We can now restate SPC4 in a form generalizing Property R.

\begin{conjecture} \label{conj_1'}
Let $L=L_1\cup L_2$ be a link in $S^3$ with $L_1$ a dotted $p$-component unlink and $L_2$ a framed link of $p+q$ components. Suppose that $L_2$ normally generates $\pi_1(S^3-L_1)$, and that surgery on $L$ (with dotted components 0-framed) is diffeomorphic to $S_q$. Then (a) there is a sequence of moves (1), (2), and (3) as above transforming $L$ to the empty diagram, and (b) if all framings on $L_2$ are even, we can require $n=0$ in each move (1).
\end{conjecture}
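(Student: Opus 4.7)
The plan is to deduce both (a) and (b) by first reconstructing the 4-manifold $M$ encoded by $L$, verifying that $M$ is a homotopy 4-sphere, invoking SPC4 to identify $M\cong S^4$, and then applying Theorem \ref{thm_kirby} to convert that diffeomorphism into the desired sequence of Kirby moves.

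First I would read off $M$ from the Kirby data. The dotted $p$-component unlink $L_1$ specifies $p$ 1-handles attached to a single 0-handle, and the framed link $L_2$ specifies $p+q$ 2-handles. The hypothesis that 0-surgery on the full link $L$ yields $S_q=\#_q S^1\times S^2$ says exactly that the boundary of $(\text{0-}\cup\text{1-}\cup\text{2-handles})$ is $S_q$, so we can close off with $q$ 3-handles and a single 4-handle in an essentially unique way, as noted just above Theorem \ref{thm_kirby}. Simple connectivity of the resulting closed 4-manifold $M$ is the normal-generation hypothesis in disguise: after the 1-handles are attached, $\pi_1$ is $\pi_1(S^3-L_1)\cong F_p$, and the 2-handles impose as relators the free homotopy classes represented by the components of $L_2$, which normally generate by assumption; hence $\pi_1=1$ already after the 2-handles, and the higher-index handles can only kill more. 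Computing Euler characteristic gives $\chi(M)=1-p+(p+q)-q+1=2$, and combined with simple connectivity this forces $b_2(M)=0$. Thus $M$ is a homotopy 4-sphere.

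Now I would invoke SPC4 to obtain an orientation-preserving diffeomorphism $M\cong S^4$. Since the empty diagram $\emptyset$ also presents $S^4$, Theorem \ref{thm_kirby} applied to the pair $(L,\emptyset)$ delivers a finite composition of moves (1), (2), (3) transforming $L$ to the empty diagram, which is exactly statement (a). For (b), note that under the convention in which dotted components are 0-framed, the assumption ``all framings on $L_2$ are even'' says that every framing occurring in $L$ is even, and the empty diagram vacuously has all framings even as well. Since $M\cong S^4$ is simply connected we have $H_1(M;\Z_2)=0$, so clause (b) of Theorem \ref{thm_kirby} applies to the pair $(L,\emptyset)$ and we may take $n=0$ in each move (1) throughout the sequence.

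The unavoidable obstacle is, of course, step three: the plan uses SPC4 as a black box, so it only reduces Conjecture \ref{conj_1'} to SPC4 rather than settling it. Indeed the reduction runs both ways, since any handle decomposition of a homotopy 4-sphere $\Sigma$ with a single 0-handle and a single 4-handle produces a link $L$ satisfying the hypotheses, and (a) for that $L$ combined with Theorem \ref{thm_kirby} forces $\Sigma\cong S^4$. So a genuine unconditional proof of (a) cannot invoke SPC4 and must instead manufacture the sequence of Kirby moves directly---ideally by analyzing the intermediate 3-manifolds appearing under partial surgery on $L$ (all of which are copies of $S_{q'}$) and using 3-manifold techniques such as sutured manifold theory, taut foliations or Heegaard Floer homology, in the spirit of Gabai's original Property R proof, to force one simplifying move at a time. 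That replacement of the SPC4 input by 3-manifold combinatorics is where the entire difficulty sits, and it is the only part of the proposal that is not essentially bookkeeping.
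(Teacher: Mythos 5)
Your proposal is correct and takes essentially the same approach as the paper's treatment of this statement, which appears as the proof of Proposition~\ref{prop_conj1_equiv_conj1'}: verify that $L$ presents a simply connected closed $4$-manifold with $\chi=2$ (hence a homotopy $4$-sphere), invoke SPC4 to get a diffeomorphism with $S^4$, and then apply Theorem~\ref{thm_kirby} (using $H_1(S^4;\Z_2)=0$ for part (b)). You also correctly identify, as the paper does, that the converse holds and that this is therefore an equivalence with SPC4 rather than an unconditional proof of the conjecture.
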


Note that for $p=0$ and $q=1$, Property R is precisely the assertion that a single move $(2)^{-1}$ suffices. When $p=0$, $L$ must be a $q$-component link with all framings and linking numbers zero. In general, the linking matrix of $L$ presents a homomorphism with cokernel $H_1(S_q)=\Z^q$, and the diagonal elements (framings) can be arranged by (1) -- (3) to be all even (since the corresponding homotopy 4-sphere has $H^2(\Sigma;\Z_2)=0$ and so admits a spin structure). Note that it is essential to restrict the number of 2-handles. Fewer than $p+q$ cannot yield a closed, simply connected 4-manifold (which must have Euler characteristic $\ge 2$), whereas a sufficient excess would allow as a connected summand an exotic, closed, 1-connected 4-manifold, so no recognition theorem could then be possible. (For a weaker statement in this case, see Conjecture \ref{1-handle}.)  Also note that while our hypothesis involves simple connectivity, it does so in the benign context where a new trivial relation (move (2)) can be added at will.  Thus, we would not expect to meet subtle presentation issues such as the Andrews-Curtis Conjecture in analyzing the scope of the moves.

\begin{proposition} \label{prop_conj1_equiv_conj1'}
Both (a) and (b) of Conjecture \ref{conj_1'} are equivalent to SPC4.
\end{proposition}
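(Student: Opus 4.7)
The plan is to treat Theorem \ref{thm_kirby} as a dictionary between orientation-preserving diffeomorphism of closed 4-manifolds and sequences of moves $(1), (2), (3)$ on their framed-and-dotted link diagrams. Given this dictionary, the proposition reduces to a careful bookkeeping step: that diagrams $L=L_1\cup L_2$ satisfying the hypothesis of Conjecture \ref{conj_1'} are precisely the diagrams one obtains from handle decompositions of homotopy 4-spheres having one $0$-handle, $p$ one-handles, $p+q$ two-handles, $q$ three-handles, and one $4$-handle, and that such decompositions are available on every smooth homotopy 4-sphere.

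\textbf{The correspondence.} First I would build $\Sigma_L$ from a diagram $L$ satisfying the hypothesis by attaching the 1- and 2-handles described by $L_1\cup L_2$ to $B^4$, yielding a 4-manifold whose boundary, by hypothesis, is $S_q=\#_q S^1\times S^2$. Then I attach $q$ three-handles and one $4$-handle; by Laudenbach--Po\'enaru the attaching regions of these top handles on $S_q$ are unique up to diffeomorphism, so $\Sigma_L$ is well defined. Simple connectivity of $\Sigma_L$ follows because $L_2$ normally generates $\pi_1(S^3\setminus L_1)=F_p$, and the Euler characteristic count $\chi(\Sigma_L)=1-p+(p+q)-q+1=2$, together with the fact that $H_2$ is torsion-free for a closed simply connected $4$-manifold, forces $H_2(\Sigma_L)=0$. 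Hence $\Sigma_L$ is a homotopy 4-sphere. Conversely, every homotopy 4-sphere $\Sigma$ admits such a balanced handle decomposition: start with any smooth handle decomposition, cancel all but one $0$-handle, dualize and cancel all but one $4$-handle, and use $\chi(\Sigma)=2$ plus the connectivity constraints to arrange the 1-, 2-, 3-handle counts to be $p$, $p+q$, $q$ respectively (the dual count of 3-handles forces the boundary of the $(0,1,2)$-stratum to be $S_q$).

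\textbf{The two directions and the main obstacle.} For SPC4 $\Rightarrow$ (a): given $L$, the manifold $\Sigma_L$ is diffeomorphic to $S^4$ by SPC4, and $S^4$ is presented by $L'=\eset$; Theorem \ref{thm_kirby} then supplies a sequence of moves $(1),(2),(3)$ from $L$ to $\eset$. For SPC4 $\Rightarrow$ (b), when $L_2$ is even-framed one also has $H_1(\Sigma_L;\Z_2)=0$, so the second parity clause of Theorem \ref{thm_kirby} lets us take $n=0$ in every move $(1)$. For (a) $\Rightarrow$ SPC4: given a homotopy 4-sphere $\Sigma$, take a balanced decomposition producing a diagram $L$ satisfying the hypothesis, apply (a) to get a move sequence reducing $L$ to $\eset$, and invoke Theorem \ref{thm_kirby} to conclude $\Sigma\cong S^4$. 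For (b) $\Rightarrow$ SPC4 the same argument works once we arrange all framings even; this is possible because every homotopy 4-sphere is spin (trivially, since $H^2(\Sigma;\Z_2)=0$), and any spin 4-manifold admits a handle decomposition whose 2-handle framings are all even, obtainable from an arbitrary one by handle slides (and, if needed, creation of cancelling $1/2$-pairs). The step I expect to demand the most care is this correspondence, specifically (i) invoking Laudenbach--Po\'enaru to see the top handles attach uniquely to $S_q$, (ii) reducing an arbitrary smooth handle structure on $\Sigma$ to the balanced shape while preserving the hypothesis of Conjecture \ref{conj_1'}, and (iii) realizing the even-framed condition in the (b) $\Rightarrow$ SPC4 direction; once these are in place, both equivalences fall out of Theorem \ref{thm_kirby} with no further input.
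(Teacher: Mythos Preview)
Your proposal is correct and follows essentially the same approach as the paper: build a closed 4-manifold from $L$, check it is a homotopy 4-sphere via $\pi_1$ and $\chi$, and then invoke Theorem~\ref{thm_kirby} in both directions. The paper's proof is terser (leaving the Laudenbach--Po\'enaru step, the balanced handle count, and the even-framing arrangement implicit or to the surrounding discussion), but your elaboration of points (i)--(iii) is exactly what underlies the paper's ``easy to verify'' and ``any homotopy 4-sphere has a handle decomposition given by a diagram as in the conjecture.''
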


\begin{proof}
Given $L$ as in the conjecture, it is easy to verify that the corresponding closed 4-manifold is simply connected with Euler characteristic 2, so it is a homotopy 4-sphere. Thus, SPC4 implies it is diffeomorphic to $S^4$. Theorem \ref{thm_kirby} now implies $L$ can be transformed to the empty link in both cases (a) and (b). Conversely, any homotopy 4-sphere has a handle decomposition given by a diagram as in the conjecture, so the latter ((a) or (b)) implies the sphere is standard.
\end{proof}

\begin{remark} \label{remark_conj1'_proof}
There is evidence that moves (1) and (2) (in the direction increasing the number of link components) cannot be dispensed with, although there is currently no proof. More precisely, it seems unlikely that the condition $pq=0$ can always be preserved when simplifying a link as in the conjecture. This evidence arises from the handle decomposition of the homotopy 4-sphere $\Sigma_0$ introduced in \cite{MR816520} and mentioned in the previous section. Failure of the Andrews-Curtis Conjecture for the presentation $\langle x,y\; | \; xyx=yxy, \; x^4=y^5 \rangle$ would imply that that handle decomposition cannot be trivialized without introducing a (2,3)-handle pair, i.e. a move (2) with $p=2$, $q=0$, increasing $q$ to 1. The explicit trivialization in \cite{MR1081936} via such a pair can also be reinterpreted as a move (1), with $n=0$, $p=0$, $q=2$ (increasing $p$ to 1), that is likely to be indispensable. The resulting trivialization of a 2-component link is exhibited explicitly in \cite{property2R} in the context of property 2R, a generalization of property R to two-component links in $S^3$.
\end{remark}

A natural approach to SPC4 is to study related conjectures. We can try to prove weaker conjectures such as Conjecture \ref{conj_1'} with the added hypothesis that $p=0$ and possibly a bound on $q$, e.g. $q\le 2$. As we have already seen, it is unlikely that we can always simplify to the unlink while preserving these hypotheses, but one might still hope to make progress by increasing $p$ and $q$ with moves (1) and (2). See \cite{0901.2319, property2R} for further discussion. In the other direction, we can try to disprove stronger conjectures. We close this section with two such candidates.

First we consider the relation of {\em stable diffeomorphism}, that is, diffeomorphism up to connected sum with sufficiently many copies of  $S^2 \times S^2$. Such a connected sum is given in Kirby calculus by Figure \ref{move4}.

\begin{figure}[htpb]
\labellist \Large\hair 2pt

  \pinlabel $\eset$ at -25 29

\endlabellist
\centering
\includegraphics[scale=0.7]{\pathtodiagrams john/move4}
\caption{The extra move in the Kirby calculus, generating stable diffeomorphism.} \label{move4}
\end{figure}

\noindent Thus, adding this fourth link move to our other three generates the relation of stable diffeomorphism of the associated 4-manifolds. A traditional question, on which there has been little progress, asks when a stable diffeomorphism can be destabilized to an ordinary diffeomorphism. Since closed, simply connected 4-manifolds are stably diffeomorphic (via the $h$-cobordism theorem) whenever they are homotopy equivalent, we do know that the link in Conjecture \ref{conj_1'} can be reduced to the empty link via moves of type (1), (2), (3), and (4).  Conjecture \ref{conj_1'} says that use of type (4) can be avoided under the given hypotheses. We can now weaken our $\pi_1$ assumption.  Probably due to merely a lack of techniques, we know of no pairs of integral homology 4-spheres which are stably diffeomorphic but not diffeomorphic.  This means that we could make a stronger Conjecture \ref{conj2} (probably dubious), equivalent to the assertion that for homology $4$-spheres, stable diffeomorphism implies diffeomorphism.

\begin{conjecture} \label{conj2}
Let $L=L_1\cup L_2$ be as in Conjecture \ref{conj_1'}, but with $L_2$ only given to generate $H_1(S^3-L_1;\Z)$ (rather than $\pi_1(S^3-L_1)$). Suppose $L'$ is obtained from $L$ by moves (1) -- (4) (and their inverses), with the same number of occurrences of move (4) and its inverse. Then $L'$ can also be obtained from $L$ by moves (1) -- (3).
\end{conjecture}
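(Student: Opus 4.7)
The plan is to reduce Conjecture~\ref{conj2} to the (open) assertion that two stably diffeomorphic integer homology 4-spheres are diffeomorphic, and then invoke Theorem~\ref{thm_kirby}. First I would verify that the closed 4-manifolds $M$ and $M'$ associated to $L$ and $L'$ are integer homology 4-spheres. With a 0-handle, $p$ 1-handles, $p+q$ 2-handles, $q$ 3-handles, and a 4-handle, $\chi(M)=1-p+(p+q)-q+1=2$. The generators of $H_1(S^3-L_1;\Z)\cong\Z^p$ are the meridians of the dotted components, which also generate $\pi_1$ of the union of the 0- and 1-handles. The hypothesis that $L_2$ generates $H_1(S^3-L_1;\Z)$ then forces the 2-handles to kill $H_1$, so $H_1(M;\Z)=0$. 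Poincar\'e duality yields $H_3(M;\Z)=0$, and the Euler-characteristic computation then forces $H_2(M;\Z)=0$. The same argument applies to $M'$.

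Next I would show that the hypothesis ``moves (4) and $(4)^{-1}$ appear equally often'' implies $M$ and $M'$ are stably diffeomorphic. Along the sequence $L=L_0,L_1,\dots,L_n=L'$, the associated 4-manifolds $M_i$ change by $M_{i+1}\cong M_i$ for moves (1)--(3); by $M_{i+1}\cong M_i\mathbin{\#}(S^2\times S^2)$ for move (4); and by $M_i\cong M_{i+1}\mathbin{\#}(S^2\times S^2)$ for move $(4)^{-1}$. Discarding the moves that leave the manifold unchanged, the remaining sequence of $\mathbin{\#}(S^2\times S^2)$-additions and deletions relates $M$ and $M'$; since by hypothesis the two types appear with equal multiplicity, one obtains $M\mathbin{\#}k(S^2\times S^2)\cong M'\mathbin{\#}k(S^2\times S^2)$ for $k$ equal to the maximum excess of $(4)$ over $(4)^{-1}$ accumulated at any stage. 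Now invoking the assertion that stably diffeomorphic integer homology 4-spheres are diffeomorphic produces a diffeomorphism $M\cong M'$ (orientation-preserving, since all moves are), and Theorem~\ref{thm_kirby} then exhibits $L'$ as obtainable from $L$ by moves (1)--(3).

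The main obstacle is the appeal in the last step to ``stably diffeomorphic implies diffeomorphic'' for integer homology 4-spheres, which the authors themselves flag as ``probably dubious'': no counterexample is known, but no proof is available either, and either resolution would presumably require genuinely new 4-manifold techniques. A secondary technical point in the middle step is that the intermediate manifolds $M_i$ need not be homology spheres, and removing an $S^2\times S^2$ summand is not unambiguous in diffeomorphism type in general; this is harmless for extracting a stable diffeomorphism between the endpoints, but it does mean one cannot hope to argue more directly that $M\cong M'$ without passing through a hypothesis on homology 4-spheres. It is also worth checking, as part of this plan, that the ambient conventions in Theorem~\ref{thm_kirby} (closed, oriented 4-manifolds, possibly with 3-handles) apply without modification in the homology-sphere regime governed by Conjecture~\ref{conj_1'}, which I believe is immediate.
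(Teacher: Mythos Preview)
This statement is a \emph{conjecture}, not a theorem, and the paper offers no proof of it. The authors explicitly introduce it as ``probably dubious'' and state that it is \emph{equivalent} to the assertion that for integral homology $4$-spheres, stable diffeomorphism implies diffeomorphism---an open question. Your proposal correctly reduces Conjecture~\ref{conj2} to exactly this open assertion, and you yourself identify the final appeal to ``stably diffeomorphic implies diffeomorphic'' as the obstacle. So what you have written is not a proof (and, given present knowledge, could not be), but it accurately reconstructs the equivalence the paper asserts in the surrounding discussion. In that sense your reasoning matches the paper's; there is simply nothing to compare your argument against, because the paper does not claim to prove the conjecture.

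One small refinement: the paper asserts an \emph{equivalence}, while you only argue one direction (the open $4$-manifold assertion implies Conjecture~\ref{conj2}). The converse is easy---given stably diffeomorphic homology spheres $M,M'$, present them by links $L,L'$; a diffeomorphism $M\mathbin{\#}k(S^2\times S^2)\cong M'\mathbin{\#}k(S^2\times S^2)$ yields, via Theorem~\ref{thm_kirby}, a sequence of moves from $L$ to $L'$ using $k$ copies of (4) and $k$ of $(4)^{-1}$, and then Conjecture~\ref{conj2} gives $M\cong M'$---but you may wish to note it if you are aiming to match the paper's claim of equivalence rather than mere implication.
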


Since integral homology 4-spheres admit unique spin structures, the conjecture can be interpreted in two equivalent ways: We can allow arbitrary $n$ in move (1), or require $n=0$ and all framing coefficients even. If $L_2$ normally generates $\pi_1(S^3-L_1)$, we know from above that $L$ can be transformed to the empty link by moves (1) -- (4), so Conjecture \ref{conj2} implies Conjecture \ref{conj_1'}. Since closed, oriented 4-manifolds are stably diffeomorphic whenever they are homeomorphic \cite{MR769285}, Conjecture \ref{conj2} has the (unlikely?) consequence that homology 4-spheres cannot admit exotic smooth structures. The conjecture clearly becomes false if we allow $L_2$ to have more than $p+q$ components, even when $L_2$ normally generates $\pi_1(S^3-L_1)$, since many simply connected 4-manifolds (including large connected sums of $S^2 \times S^2$) do admit exotic smooth structures. In particular, there are links that are transformable to the empty link by moves (1) -- (4), but not if we disallow move (4) in the direction increasing the number of link components.

Our other strengthening of SPC4 involves odd-index handles. A manifold built without 1-handles is obviously simply connected, and a closed 4-manifold without 3-handles can be alternatively presented without 1-handles (by reversing the sign of the Morse function). It is an old open question whether every simply connected, closed 4-manifold has a handle decomposition without odd-index handles. Such a decomposition would have $b_2(M)$ 2-handles, so an affirmative answer would imply SPC4 and (via Property P) nonexistence of exotic smooth structures on $\mathbb{CP}^2$. An affirmative answer has always seemed unlikely, due to an abundance of potential counterexamples. However, the best known example has recently been shown to admit such a handle structure \cite{0805.1524}. This, together with recent progress on Cappell-Shaneson spheres \cite{0907.0136,0908.1914}, seems to increase the likelihood that such handle structures always exist. We translate the assertion into a conjecture about links:

\begin{conjecture} \label{1-handle}
Let $L=L_1\cup L_2$ be as in Conjecture \ref{conj_1'}, but with no restriction on the number of components of $L_2$. If $p+q\ne 0$ then $L$ can be transformed by moves (1) -- (3) to a similar link with a smaller value of $p+q$.
\end{conjecture}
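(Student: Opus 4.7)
The plan is an induction on $p+q$. Turning the Morse function upside down swaps the roles of 1- and 3-handles, so by duality we may assume $p\ge 1$ and aim to locate, via moves (1)--(3), a geometrically cancelling (1,2)-handle pair. Removal of such a pair is literally an inverse move of type (1) and reduces $p+q$ by one, at which point the inductive hypothesis closes the argument.

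The first step is algebraic. The meridians of $K_1,\ldots,K_p\subset L_1$ freely generate $\pi_1(S^3\setminus L_1)\cong F_p$, and the hypothesis that $L_2$ normally generates this group forces the linking matrix $A=(\mathrm{lk}(J_j,K_i))$ of $L_2$ against $L_1$ to be a surjection of free abelian groups after abelianization. The three kinds of handle slide in move (3) together suffice to perform the row operations, column operations, and integer entry shifts needed for Smith normal form, so one may arrange a distinguished subset $J_1,\ldots,J_p\subset L_2$ whose linking matrix against $K_1,\ldots,K_p$ is the $p\times p$ identity, leaving the other $r-p$ components of $L_2$ with zero linking against $L_1$. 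The second, geometric step is to isotope $J_1$ inside $S_p=\#_p\,S^1\times S^2=\partial(\text{0-handle}\cup L_1)$ onto the core circle $S^1\times\{\mathrm{pt}\}$ of the first summand with framing zero --- equivalently, to make $J_1$ meet the belt 2-sphere of $K_1$ transversely in a single point. Once arranged, the pair $(K_1,J_1)$ cancels and the induction step is complete.

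The geometric step is the substantive obstacle, strictly harder than Gabai's Property R. The case $p=1$, $r=1$, $q=0$ already reduces to SPC4 for a decomposition with a single 1- and 2-handle, and the multi-component analogue for $p\ge 2$ is (a variant of) the Property $p$R conjecture discussed in \cite{property2R}. There seems no way to promote algebraic intersection one with $K_1$ to geometric intersection one without fresh 3-manifold input --- taut foliations, sutured manifolds, or Heegaard Floer bounds in the Gabai tradition, now applied to knots in $\#_p\,S^1\times S^2$ subject to the global constraint that the remaining framed components of $L_2$ surger $S_p$ to $S_q$. A complementary and plausibly indispensable tactic is to permit temporary births of (1,2)- or (2,3)-pairs (moves (1) and (2) in the creating direction) to unknot $J_1$ at the cost of a transient enlargement of $p+q$; this matches the (2,3)-stabilizations essential to the trivializations in \cite{MR1081936,0907.0136} and is consistent with Remark \ref{remark_conj1'_proof}.
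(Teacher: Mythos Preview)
The statement you are attempting to prove is Conjecture~\ref{1-handle}, and the paper does \emph{not} prove it. Immediately after stating it, the authors write: ``This is easily seen to be equivalent to the conjecture that all closed, simply connected 4-manifolds have decompositions without odd-index handles.'' That is a famous open problem; in particular it implies SPC4. So there is no ``paper's own proof'' to compare against --- the authors present this as a conjecture, not a theorem.

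Your proposal is not a proof either, and to your credit you essentially say so. The algebraic first step (Smith normal form on the linking matrix via handle slides) is fine and standard. But your ``second, geometric step'' --- isotoping $J_1$ onto a core circle of an $S^1\times S^2$ summand --- is exactly the content of the conjecture, and you correctly identify it as ``the substantive obstacle, strictly harder than Gabai's Property R,'' with ``no way to promote algebraic intersection one \ldots\ to geometric intersection one without fresh 3-manifold input.'' That is not a gap in an otherwise complete argument; it is the entire problem. The case $p=0$, $q=2$ alone is the generalized Property 2R question of \cite{property2R}, already open, and your inductive scheme would require solving every such case simultaneously. Allowing temporary births via moves (1) and (2), as you suggest at the end, does not help the induction: those moves \emph{increase} $p+q$, so invoking them destroys the inductive structure you set up.

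In short: you have written a reasonable summary of why Conjecture~\ref{1-handle} is hard and what a proof would need, but this is commentary on an open problem rather than a proof attempt with a fixable gap.
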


This is easily seen to be equivalent to the conjecture that all closed, simply connected 4-manifolds have decompositions without odd-index handles. Unlike previous conjectures, this one becomes weaker if we add the restriction that all framings must be even -- the previous statement becomes restricted to spin manifolds. Either way, it is easy to see directly that the conjecture implies Conjecture \ref{conj_1'}. 
\section{Bands and isotopies}
\label{sec:bands}
Recall that in \cite[Figure 17]{MR1094545} the second author gave a handle presentation of the Cappell-Shaneson spheres $\Sigma_m$ with no $3$-handles. This is reproduced as Figure \ref{fig:gompf-fig-17} here.

\begin{figure}[!ht]
\begin{equation*}
\mathfig{0.8}{gompf-fig-17}
\end{equation*}
\caption{Figure 17 from \cite{MR1094545}, showing the handle presentation of the Cappell-Shaneson sphere $\Sigma_m$.}%
\label{fig:gompf-fig-17}
\end{figure}

We want to interpret this as a complicated handle presentation of the standard boundary $3$-sphere (after we've removed the $4$-handle), and perform a sequence of Kirby moves turning this into the trivial presentation. As we do this, we need to follow along the two meridian loops around the $2$-handles; this link in the $3$-sphere is the one we hope is not slice. This link can be read off from Figure 9 of \cite{MR1094545}, reproduced
in Figure \ref{fig:gompf-m-original}. Ignoring the component labelled by $xy$ there, the remaining two component link is what we're after, and we'll call that $L_m$ throughout. 
Note that the unknotted circle appearing in that diagram as a dashed line is not a third component, but notation for a full
positive twist.

\begin{figure}[!ht]
\begin{equation*}
\mathfig{0.8}{gompf-fig-9}
\end{equation*}
\caption{Figure 9 from \cite{MR1094545}, showing the two component cocore link $L_m$. What appears to be a third, unknotted, component drawn with a dashed line is actually
notation for a full positive twist on the strands passing through it.}%
\label{fig:gompf-m-original}
\end{figure}

We'll first simplify this picture of $L_m$, making the twist circle lie flat. In our pictures, the twist circle in shown in blue,
to help distinguish it from the actual link, which appears in black and green. At each step, the part of the knot we're
about to move is marked with dashes (and red, if you're reading this in color), and often its destination is indicated with a thin dashed line.

\begin{align}
                 \mathfig{0.45}{gompf/1} & \rightsquigarrow \mathfig{0.45}{gompf/2} \notag \displaybreak[1] \\ 
\rightsquigarrow \mathfig{0.45}{gompf/3} & \rightsquigarrow \mathfig{0.45}{gompf/4} \notag \displaybreak[1] \\
\rightsquigarrow \mathfig{0.45}{gompf/5} & \rightsquigarrow \mathfig{0.45}{gompf/6} \label{eq:Lm-untwisted}
\end{align}

Finally, specializing to $m=1$, we obtain the link $L_1$ in Figure \ref{fig:gompf-1}.

\begin{figure}[!ht]
\begin{equation*}
\mathfig{0.6}{gompf-1}
\end{equation*}
\caption{The two component cocore link with $m=1$.}%
\label{fig:gompf-1}
\end{figure}

It's difficult to work directly with these pictures; we'll first perform a series of isotopies to
ensure that all the strands passing through the `twist circle' are
parallel. Omitting several steps, we obtain the link in Figure \ref{fig:link}. Note that we've chosen an arbitrary orientation, both because we're about to write down the Gauss code, which needs an orientation, and because Khovanov homology is an invariant of oriented links (although only depending weakly on the orientation, like the Jones polynomial).
\begin{figure}[!ht]
\begin{equation*}
\mathfig{1.0}{link}
\end{equation*}
\caption{Another picture of the two component cocore link with $m=1$. Now all the strands passing through the twist circle are parallel.}%
\label{fig:link}
\end{figure}

This presentation has $222$ crossings. ($112$ are outside the twist circle, and $11 \times 10 = 110$ are in the full twist.) It has Gauss code (first the green component, then the black, in each case starting at the orientation arrow):
\begin{quote}
\tt
(1, 2, 3, 4, 5, 6, 7, -8, -9, -10, -11, -12, -13, -14, 15, -16, 17,
-18, 19, -20, 21, -22, 23, -24, 25, -26, 27, -28, 29, -30, 31, -32,
33, -34, -35, -36, -37, -38, -39, 40, -6, -41, -42, 43, 44, 45, 46,
47, 48, 49, 50, 51, 52, -53, -54, -55, -56, -57, -58, -59, -40, -7,
41, 60, 61, 62, 63, -64, 65, 66, -67, -48, -68, -69, 70, -71, 72,
-73, 74, -75, 76, 77, 78, 79, 80, 81, 22, -23, -82, -83, -84, -85,
-86, 87, 88, 89, 90, 35, 12, 91, 92, 93, 94, -95, -45, -96, -62, -97,
-2, 98, 57, -99, -100, -101, -102, -88, -103, -104, -105, -29, 28,
106, 107, 108, -109, 110, -111, 84, -79, 112, -113, 114, -115, 116,
-117, 118, -119, -120, -93, -121, 122, 123, 10, 37, 124, 125, 101,
126, -127, -74, -128, -129, -114, -130, -131, -19, 18, 132, 133, 115,
134, 135, 73, 136, 137, -138, 139, -140, -141, -50),

(55, -142, -143,
144, -145, -139, -146, -70, -147, -148, -118, -149, -150, -15, 14,
151, 152, 119, 153, 154, 69, 155, 140, 156, -157, -52, 53, 143, 158,
100, 159, 160, 38, 9, -123, -161, -91, -162, 163, -164, -151, 150,
-165, 166, -132, 131, -167, 168, -81, 82, -169, 170, -171, 172, -106,
105, -173, 174, 32, -33, -175, -90, -176, -124, -160, -177, 59, 178,
-4, 179, 97, 180, -181, -65, 182, 67, -49, -183, -155, 146, -184,
185, -136, 127, 75, 186, 187, 113, 188, 167, 20, -21, -168, -189,
-112, -190, -191, -76, 192, 102, 193, 176, 36, 11, 161, 194, 121,
195, -196, -44, -197, -61, -179, -3, 198, 58, -199, -159, -125, -193,
-89, -200, -174, -31, 30, 173, 104, -107, 201, -202, 203, -204, 83,
-80, 189, -188, 130, -133, 205, -206, 149, -152, 164, -163, -207,
-92, -194, -122, 8, 39, 177, 199, 99, 208, 142, 54, -51, -209, -156,
145, 138, 184, 71, 210, 211, 117, 206, 165, 16, -17, -166, -205,
-116, -212, -213, -72, -185, -137, 214, -158, -208, 56, -98, -198,
-178, -5, -60, -215, -43, 196, 95, 216, 217, 68, 183, 141, 209, 157,
-144, -214, -126, -192, -87, -218, -108, -201, -172, -27, 26, 171,
202, 109, 219, -220, 85, -78, 190, -187, 129, -134, 212, -211, 148,
-153, -221, -94, -195, 42, 215, 197, 96, 222, 64, 181, -66, -182,
-47, -217, -154, 147, -210, 213, -135, 128, -186, 191, -77, 86, 220,
111, 204, 169, 24, -25, -170, -203, -110, -219, 218, 103, 200, 175,
34, 13, 162, 207, 120, 221, -216, -46, -222, -63, -180, -1)
\end{quote}

We never seriously considered giving it directly to our Khovanov homology computer programs.
(We've found an ordering of the crossings in this
presentation that results in a girth of 24. It might be possible to do better, but probably not by much. See section \S \ref{sec:algorithm} for a discussion of girth as an obstacle to Khovanov homology calculations.)

Instead, we decided to look for some bands that can be added to the link, hoping to form a much simpler knot. If the original two component
link is slice, of course every such knot must be slice too, and so an obstruction for any knot obtained by adding a band will do.
Of course, we might be throwing the baby out with the bath water here!

As mentioned in the introduction, we considered three different bands on $L_1$, resulting in knots $K_1$, $K_2$ and $K_3$:
\begin{align*}
\mathfig{0.15}{bands/band-12-pre} & \rightsquigarrow \mathfig{0.15}{bands/band-1-post} = K_1, \\
\mathfig{0.15}{bands/band-12-pre} & \rightsquigarrow \mathfig{0.15}{bands/band-2-post} = K_2, \displaybreak[1] \\
\intertext{and}
\mathfig{0.15}{bands/band-3-pre} & \rightsquigarrow \mathfig{0.15}{bands/band-3-post} = K_3.
\end{align*}

The first two bands take place in the lower right corner of Figure \ref{fig:gompf-1}, while the last takes places near the centre of the diagram. It's relatively easy to see that $K_1$ is actually ribbon; we'll leave this as an exercise to the reader, and spend the rest of this section producing nicer isotopy representatives for the knots $K_2$ and $K_3$.

\subsection{The knot $K_2$}
The knot $K_2$ easily isotopes to
\begin{equation*}
\mathfig{0.5}{bands/A}.
\end{equation*}
We'll show an explicit sequence of isotopies rearranging this knot so that the
strands passing through the twist circle become parallel, performing some simplifications along the way. As before, at each step the isotopy we're
performing is indicated with thick dashed red arcs showing what we're moving, and thin dashed black arcs showing where we're going.
\begin{align*}
\mathfig{0.5}{bands/Ap} & \rightsquigarrow \mathfig{0.5}{bands/B} \displaybreak[1] \\
\mathfig{0.5}{bands/Bp} & \rightsquigarrow \mathfig{0.5}{bands/C} \displaybreak[1] \\
\mathfig{0.5}{bands/Cp} & \rightsquigarrow \mathfig{0.5}{bands/C2} \displaybreak[1] \\
 \rightsquigarrow \mathfig{0.5}{bands/C3}
                        & \rightsquigarrow \mathfig{0.5}{bands/D} \displaybreak[1] \\
 \rightsquigarrow \mathfig{0.5}{bands/E}
                        & \rightsquigarrow \mathfig{0.5}{bands/F}.
\end{align*}

This knot has $86$ crossings, and we've given it the orientation consistent with the orientation of the black component of the link in Figure \ref{fig:link}. Its Gauss code (again, starting from the orientation arrow) is
\begin{quote}
\tt
(-1, -2, -3, 4, 5, -6, -7, 8, 9, -10, -11, -12, -13, 14, 15, 16, 17, -18, 19, -20,
21, -22, -23, 24, 25, 26, 6, 27, -28, -29, -8, -30, -31, 32, -33, -34, 35, 36, 20,
37, 38, 39, -40, -41, -42, -19, -43, 44, 30, 45, -46, 23, 47, 48, 49, 1, 50, -51,
-52, -53, -4, -25, -54, 46, 55, 31, 56, -35, -57, -21, -58, -59, -60, 61, 62, 63,
22, 64, 34, 33, -32, -56, -44, -9, -65, -66, 67, 52, 68, 3, -48, -62, 59, -38, 41,
-69, 70, -15, 12, -71, 72, 73, -74, -75, 66, 28, 76, 7, 77, 54, -24, -47, -63, 58,
-37, 42, -78, 79, -16, 11, 71, 80, -81, -72, -82, 65, 29, -76, -27, 83, 53, -68,
-84, -50, 75, 82, 10, -17, -79, -70, -85, 86, 69, 78, 18, 43, -36, 57, -64, -55,
-45, -77, -26, -5, -83, -67, 51, 84, 2, -49, -61, 60,-39, 40, -86, 85, -14, 13,
-80, 81, -73, 74)
\end{quote}

It's actually easy to do a few more simplifications, reducing the number of crossings to $74$. Mysteriously, however, our programs seemed
to like this presentation more, so we won't bother investigating those simplifications here.

\subsection{The knot $K_3$}
\label{sec:K3}
Alternatively, we can start with the knot $K_3$, and perform the following sequence of isotopies.
\begin{align*}
\mathfig{0.5}{bands/Z1} \displaybreak[1] \\
\mathfig{0.5}{bands/Z2} \displaybreak[1] \\
\mathfig{0.5}{bands/Z3} \displaybreak[1] \\
\mathfig{0.5}{bands/Z4} \displaybreak[1] \\
\mathfig{0.5}{bands/Z5} \displaybreak[1].
\end{align*}
This knot has $83$ crossings, and its Gauss code is
\begin{quote}
\tt
(1, 2, 3, 4, 5, 6, -7, -8, 9, -10,
-11, -12, -6, 13, -14, 15, -16, 17, 18, -19,
-20, -1, 21, 22, 10, -9, 23, 24, 25, -26,
-27, -28, 29, 30, -31, 32, -33, -34, -17, -35,
-36, 37, 38, 20, 39, 40, -41, -29, 28, 42,
-43, 44, 45, 46, -47, -39, -48, 49, 34, -50,
51, 14, 52, 53, -54, -55, -13, -56, 26, 57,
58, 43, -44, 59, 60, -21, -61, -62, -4, 54,
-53, 63, -64, 36, 65, -66, -37, -3, 62, 67,
12, 7, 68, -25, -69, -70, -59, -45, -40, -71,
72, 33, 50, 16, 73, 64, -63, -74, -15, -51,
56, -68, -75, -23, -76, 77, 70, -58, 78, 27,
-30, -79, 71, 48, 19, 80, 66, -65, -81, -18,
-49, -72, -32, 31, 79, 41, -42, -78, -57, 69,
82, 76, -22, -83, -67, -5, 55, -52, 74, -73,
35, 81, -80, -38, -2, 61, 83, 11, 8, 75,
-24, -82, -77, -60, -46, 47)
\end{quote}

After the initial release of this paper on the arXiv, Nathan Dunfield contacted us with a `better' presentation of $K_3$ (with girth $14$ rather than $16$, see \S \ref{sec:algorithm} below). Its Gauss code is
\begin{quote}
\tt
(-64, -63, -62, 48, 24, 20, 17, -1, -2, 5, 8, 12, -40, -41, -42, -43, 
-44, -45, -46, -47, -48, 84, 91, 90, 88, 82, -76, -75, -74, -73, -10, 
-11, -12, -13, -14, -15, -16, 22, 26, -28, -29, -30, -31, -32, -33, 
40, 49, 68, 73, -91, 85, -80, -79, -78, -77, 72, 67, 61, 51, 42, 39, 
33, 13, 7, 4, 2, 19, -24, -25, -26, -27, 29, 35, 46, 55, 57, 63, 79, 
87, -88, -89, 74, 69, 50, 41, -39, -38, -37, -36, -35, -34, 25, 21, 
-17, -18, -19, -84, -85, -86, -87, 81, 77, 65, 59, 53, 44, 37, 31, 
15, -6, -7, -8, -9, 10, -68, -69, -70, -71, -72, 66, 60, 52, 43, 38, 
32, 14, 6, 3, 1, 18, -20, -21, -22, -23, 30, 36, 45, 54, 58, 64, 78, 
-81, -82, -83, 75, 70, -61, -60, -59, -58, -57, 56, 47, 34, 28, 27, 
23, 16, -3, -4, -5, 9, 11, -49, -50, -51, -52, -53, -54, -55, -56, 
62, 80, 86, -90, 89, 83, 76, 71, -67, -66, -65)
\end{quote}
Although this presentation has $91$ crossings, an easy isotopy reduces it to only $80$.

\subsection{The $m=-1$ case}
\label{sec:minus1}
Finally, for completeness we'll briefly describe the $m=-1$ case. Starting with the diagram from Equation \eqref{eq:Lm-untwisted} we can specialize to $m=-1$, obtaining
\begin{equation*}
\mathfig{0.4}{minus1}
\end{equation*}
(Note that when $m$ is negative, the strands spiral in the opposite direction.) The entire link still appears to be too hard to calculate with, but we experimented with one band that gives a knot that isn't obviously ribbon. The band is the `same' one as we used to produce $K_2$ in the $m=1$ case, but we won't show the isotopy here; hopefully a Gauss code for the resulting $39$ crossing knot is enough if anyone is interested in trying further computations.
\begin{quote}
\tt
(1, 2, 3, 4, -5, -6, -7, 8, 9, 10, 11, -12, -13, -14, 15, 5, 16, -3, 
-17, 18, 19, 20, 21, 22, 6, 23, -8, -24, -25, -26, 27, 28, 29, 7, 
-23, -15, -30, -31, 32, 33, -34, 13, -10, 25, -28, -21, -35, -1, -36, 
-19, 31, -32, 37, -38, 12, -11, 26, -27, -20, 36, 39, 17, -4, -16, 
-22, -29, 24, -9, 14, 34, 38, -37, -33, 30, -18, -39, -2, 35)
\end{quote}

\section{Calculations}
\label{sec:calculations}
At this point we're ready to go. We have two interesting knots $K_2$ and $K_3$, and we've established that if either is not slice, then the smooth $4$-dimensional Poincar\'{e} conjecture as well as the Andrews-Curtis conjecture must be false. All that remains is to calculate the $s$-invariants, and hope one is nonzero.

But not so fast! We have two obstacles. First, even calculating the two-variable Khovanov polynomial of such a large knot as $K_2$ is a formidable
computational task. Second, in general, calculating the $s$-invariant can be even harder than calculating the polynomial. In the next two sections
we'll address these problems in turn. Section \ref{sec:algorithm} briefly describes the computer program we used. We started with a program
written in Java by Green \cite{green-implementation}, implementing Bar-Natan's algorithm described in \cite{math.GT/0606318}. We then
made a series of improvements, resulting in both significant reductions in memory requirements, and significant improvements in speed. This discussion
will assume some familiarity with Bar-Natan's underlying algorithm. Section \ref{sec:extracting-s}
describes a constraint on the Khovanov two-variable polynomial coming from the $s$-invariant. Under some circumstances the $s$-invariant can determined
directly from the polynomial. However this extraction can itself be a non-trivial computation!

Finally, in Section \ref{sec:results}, we show the output of the program from \S \ref{sec:algorithm} for $K_2$, and apply the the methods of \S \ref{sec:extracting-s} to extract the $s$-invariant.

\subsection{A faster, smaller implementation of Bar-Natan's algorithm}
\label{sec:algorithm}
\newcommand{\KnotTheory}{{\tt KnotTheory`}{}}

The current `state of the art' algorithm for computing Khovanov homology is due to Bar-Natan, and is described in some detail in his paper
\cite{math.GT/0606318}. We give a rather schematic outline of the algorithm here. 

To compute the Khovanov homology of a link $L$, begin by drawing a planar presentation, say with $M$ crossings. (We'll suppose for simplicity that $L$ is not `obviously' split.) Next choose an ordering of the crossings such that each crossing (except the first) is connected to one of the earlier crossings. This lets us construct a sequence of subtangles of $L$, which we'll call $T_m$, so that $T_m$ contains the first $m$ crossings, all the arcs connecting those crossings, none of the later crossings, and none of the arcs connecting those later crossings. In particular, each tangle $T_m$ is just the intersection of the presentation with some disc, and these discs grow monotonically. See Figure \ref{fig:family-of-tangles} for an example. The last tangle $T_M$ is the entire link, so if we can efficiently calculate the Khovanov homology of $T_{m+1}$ from that of $T_m$, we have a chance.

\begin{figure}[!ht]
\begin{equation*}
\mathfig{0.55}{algorithm/9_18}
\end{equation*}
\caption{An ordering of the crossings of a presentation of the knot $9_{18}$, and the resulting sequence of increasing subtangles exhausting the diagram. The sequence of girths is $4,4, 6, 4,4,4,4,4,0$.}
\label{fig:family-of-tangles}
\end{figure}

The invariant of tangles defined in \cite{MR2174270} is a complex in a certain category. Just as tangles form a planar algebra (meaning that we can perform arbitrary planar compositions of tangles with appropriate boundaries), \cite{MR2174270} defines a planar algebra structure on these complexes, so that Khovanov homology becomes a map between planar algebras. Thus, given the tangle $T_m$, and the $(m+1)$-st crossing $X$, we can produce the complex $Kh(T_{m+1})$ as the planar composition of $Kh(T_m)$ and the standard complex associated to the crossing, $Kh(X)$. This rule isn't complicated; it's essentially just taking the tensor product of the two complexes.

The real advantage of this scheme comes because in the end, we are only interested in the homotopy type of the complex associated to the whole link. Since the rule for planar composition of tangles is in fact well-defined on the homotopy types of the input tangles, we are free at each step to find a simpler representative of the homotopy type of $Kh(T_m)$. That is, rather than produce an enormous complex $Kh(T_M)$, and then find a simpler homotopy representative, we can perform incremental simplifications at each step along the way. 

What are these simplifications? We just change the complex by \emph{simple homotopies}, that is, we discard contractible direct summands. To identify contractible direct summands, we use the `Gaussian elimination' lemma of \cite{math.GT/0606318}, which shows how, whenever a matrix entry in a differential of the complex is an isomorphism in the underlying category, we can change bases in order to produce an explicit direct summand which is contractible.

The difficulty of a Khovanov homology calculation via this algorithm depends critically on the girth of the link.
The girth of a link presentation $L$ with an ordering of the crossings as above is just the maximum number of boundary points of the intermediate subtangles $T_m$. The girth of a link is the minimum of the girths of its presentations. One expects the girth of a `generic' large knot to scale with the square root of the number of crossings; the knots $K_2$ and $K_3$ have presentations with girth $14$ and $16$ respectively. We had thought it unlikely that there were better presentations, but have been proved wrong by Nathan Dunfield, who found a girth $14$ presentation of $K_3$.
Our rough rule of thumb is that any link with girth $12$ or less is relatively accessible to computer calculations, links of girth $14$ might be possible with sufficient patience and hardware, but that a link of girth $16$ or more is probably impossible. Of course, for a fixed girth more crossings is worse than fewer, but the relationship between memory requirement and girth in actual calculations is striking. A partial heuristic explanation for this is that the indecomposable objects in Bar-Natan's category are just the Temperley-Lieb diagrams; these are counted by Catalan numbers which grow exponentially. We don't expect to be able to perform many simplifications by discarding contractible direct summands, because matrix entry isomorphisms become increasingly uncommon as the variety of source and target objects increases.

The existing implementations of this algorithm use the version of Khovanov homology
described in \cite{MR2174270}, for which the chain complex associated to a tangle lives in a category of tangle smoothings and cobordisms between them, modulo certain relations  (this category is a categorification of the Temperley-Lieb category). 
It should
also be possible to use the more algebraic version described in \cite{MR1928174}, for which the chain complexes live in categories of bimodules over certain rings. To our knowledge, however, no such implementation exists.\footnote{A defect of this version of Khovanov homology for tangles is that
it gives operations for stacking tangles in two directions (via `external' tensor product of bimodules, and tensoring bimodules over the rings), rather than for arbitrary planar compositions of tangles. Of course, arbitrary planar compositions can be decomposed into sequences of stacking operations, but nevertheless this would complicate the algorithm as described in \cite{math.GT/0606318} and here.}

Two independent implementations of this algorithm exist to date. The first, written by Bar-Natan in \MMA, is available as
part of the \KnotTheory package, from \url{http://katlas.org}. For essentially all purposes, however, it has been made obsolete by Jeremy Green's
java based implementation which is also available through the \KnotTheory package.
These implementations will be referred to as \code{FastKh} and \code{JavaKh} respectively; these are also the names used within the \KnotTheory package.
Bar-Natan's implementation was solely intended as a demonstration of the algorithm, and no significant attempts were made to
optimise the program for either speed or memory consumption. Green's implementation is on the order of thousands of times faster than Bar-Natan's. 

Our implementation is an update of \code{JavaKh}. To distinguish the the original from the updated version, we'll use the names \code{JavaKh-v1} and \code{JavaKh-v2}. Some code (particularly that dealing with `cobordism arithmetic') remains unchanged, but most of the
`outer layers' have been rewritten. This update has already been used by other researchers, in particular in published work in \cite{0901.4039}. The changes we made fall into four categories described below.

\paragraph{Interface improvements.} Large calculations can now give progress reports, at various levels of detail. At its most verbose, every matrix entry isomorphism which is discarded gets reported, along with elapsed time, memory use, and the capacities of various internals caches. These reports are not available through the \MMA interface to \code{JavaKh}, but only through the direct command-line interface. (See \S \ref{sec:running-javakh}, or just try the switches {\tt -i} and {\tt -d}.)
\paragraph{Memory optimizations.} A significant number of memory-saving tweaks have been made throughout the code, for example
\begin{itemize}
\item Using arrays of {\tt byte}s, rather than arrays of {\tt int}s, to store topological descriptions of surfaces. (It would be possible to go much further here, as at least up to girth $14$ one could package some of this data into arrays of `half-bytes'. We had insufficient enthusiasm for writing this sort of bit-flipping code.)
\item Using linked lists or hash-maps instead of pre-allocated arrays for matrix entries or terms of linear combinations. (More generally, we've made it much easier to `drop in' a different implementation of a particular storage model, and tried benchmarking a few different options.)
\item Storing each complex on disk, instead of in memory, and only loading a few relevant homological heights at a time. This feature is not enabled by default, as it signficantly slows computations. It can be enabled on the command line with the switch {\tt -C}. This is not as effective as we'd at first hoped; the memory usage is sharply peaked in the middle of the complex, and so a significant fraction of the complex must be loaded in memory even to deal with three consecutive differentials, as required by the Gaussian elimination step of the algorithm. 
This was an attempt to work around Java's infamous unwillingness to use virtual memory directly. We suspect that even if virtual memory were available, the Gaussian elimination algorithm would be extremely slow if the entire matrix for a differential could not be held in memory.
\item Caching small arrays of {\tt byte}s and {\tt int}s. While performing cobordism arithmetic, many redundant copies of small arrays with small integer entries are generated. In some circumstances, passing these through a cache upon creation results in significant memory savings. (At the same time, we disabled caching routines in the original ``JavaKh'' implementation which worked at the level of cobordisms.)
\end{itemize}
\paragraph{Allowing arbitrary orderings of crossings.} Notice in the schematic description of the algorithm above that we need to choose the order in which we add the crossings to the `inner' tangle. In both  \code{FastKh} and the original \code{JavaKh-v1}, the first crossing is chosen essentially arbitrarily (whatever comes first in the {\tt PD} presentation produced by \MMA), and at each step the next crossing is chosen from amongst those which are `maximally connected' to the current inner tangle. This should be thought of as a greedy algorithm attempting to minimise the maximal girth of the intermediate inner tangles (that is, the number of boundary points). Our update of \code{JavaKh} allows the user to disable this algorithm, and just process the crossings in the order that they appear in the presentation of the link. We've also written an auxiliary program in \MMA that attempts to order the crossings so as to minimise maximal girth. This program isn't particularly clever; it essentially uses the greedy algorithm described above, but when there are alternatives (e.g. for the first crossing, or for a subsequent crossing from amongst those which are equally maximally connected to the inner tangle) it makes random choices, and tries many times. This is available via the function {\tt FindSmallGirthOrdering} in the package \KnotTheory, and can be chained with the \code{JavaKh} algorithm, for example by the commands
\begin{align*}
	\tt Kh[FindSmallGirthOrd&\tt ering[TorusKnot[7,6]], \\
	                                                & \tt ExpansionOrder \to False][q,t] \\
\intertext{or}
	\tt Kh[FindSmallGirthOrd&\tt ering[ TorusKnot[7,6], 1000],  \\
	                                                 & \tt ExpansionOrder \to False][q,t]
\end{align*}
to specify that {\tt FindSmallGirthOrdering} should return the best candidate after 1000 trials.

\paragraph{Canceling blocks of isomorphisms.}
Bar-Natan's original algorithm looks for a matrix entry in the differential which is an isomorphism (that is, a multiple of a cylinder cobordism), and performs a change of basis so this matrix entry becomes a contractible direct summand, which is then discarded. This step must be repeated many times as long as more isomorphisms can be found, and for large tangles this becomes extremely time-consuming. In \code{JavaKh-v2}, we instead look for submatrices of a differential which are diagonal, with all diagonal entries isomorphisms, and discard the corresponding contractible direct summands. While most of our other improvements concentrated on optimising memory consumption, this modification results in a significant speedup in most cases. 
It might be possible to go further in this direction, for example by looking for upper triangular submatrices all of whose diagonal entries are isomorphisms. Computing the inverse of such matrices is very efficient, but the cost of looking for such matrices rather than just diagonal ones might limit the improvements available.

\subsubsection{Running \code{JavaKh}}
\label{sec:running-javakh}
You have essentially two options for running \code{JavaKh}; via the \code{KnotTheory`} package in \MMA, which is convenient but does not offer access to all functionality, or directly from the command line, which is much more suitable for long computations in which progress reports are useful. The version of \code{JavaKh-v2} which was current at the time of writing is included in the \code{arXiv} sources for this article, but it's likely that the version included in the \code{KnotTheory`} package is more up-to-date. You can always find the latest stable version at \url{http://katlas.org/svn/KnotTheory/tags/stable/KnotTheory/JavaKh-v2}.

As long as you have a recent version of the \code{KnotTheory`} package, the function \code{Kh} automatically uses the \code{JavaKh-v2} implementation. This can be modified using the \code{Program} option; the details of this and other options are summarised in Figure \ref{fig:mma}
\begin{figure}[!htb]
\begin{tabular}{|c|c|p{3in}|}
\hline
option & value & description \\
\hline\hline
\code{Program} & \code{"FastKh"} & Use the original \MMA implementation. \\
                          & \code{"JavaKh-v1"} & Use Green's \code{Java} implementation. \\
                          & \code{"JavaKh-v2"} & (default) Use the modified implementation described here. \\
\hline
\code{JavaOptions} & e.g. \code{"-Xmx512m"} & Arguments to pass to the \code{Java} virtual machine.  This example allows the heap size to grow to \code{512} megabytes. Depending on your hardware and operating system, increasing this parameter may allow computations of larger knots. \\
\hline
\code{ExpansionOrder} & \code{Automatic} & (default) Automatically reorder crossings using an internal greedy algorithm. \\
                                      & \code{False} & Do not reorder crossings (ignored by \code{JavaKh-v1}).\\
\code{Modulus} & 0 & (default) Work over the integers. \\
                          & $p$ & Work over the integers mod $p$. \\
\code{Universal} &\code{True} & Work in `universal mode'. Undocumented, and not for the faint-hearted, but see also the \code{UniversalKh} function. \\
\hline
\end{tabular}
\caption{The options available for the function \code{Kh} in the \code{KnotTheory`} package.}
\label{fig:mma}
\end{figure}

Thus for example we might run
\begin{align*}
\tt Kh[Knot[8,19]& \tt, Program \to "FastKh", Modulus \to 5] \\
\intertext{to compute the mod $5$ homology using the original implementation, or}
\tt Kh[largeknot& \tt, JavaOptions \to "-Xmx2000m", ExpansionOrder \to False]
\end{align*}
to compute the homology of a diagram in which we've already chosen a good ordering of the crossings, and allowing \code{Java} to use up to $2$ gigabytes of physical memory.

You can also run \code{JavaKh-v2} from the command line, and this is more suitable for long calculations. You'll need to tell \code{Java} where to find the class and library files (everything in the \code{bin} directory), and to execute the class \code{org.katlas.JavaKh.JavaKh}. The available command line options appear in Figure \ref{fig:commandline}.

\begin{figure}
\begin{tabular}{|c|p{4in}|}
\hline
option & description \\
\hline\hline
\code{-i} & provide more verbose output, including progress reports. \\
\hline
\code{-Z} & work over integers. \\
\code{-Q} & work over the rationals. \\
\code{-m <prime>} & work over the integers mod $p$. \\
\code{-U} & run in `universal mode'.\\
\hline
\code{-O} & don't reorder crossings internally. \\
\code{-C} & save intermediate results in files in the current directory, and/or resume from such files (just use \code{ctrl-C} to break out of the computation). \\
\code{-D} & (experimental!) switch to a much slower memory-saving mode (you should also specify \code{-Djava.io.tmpdir=\$TMPDIR} on the command line).\\
\code{-P} & (likely to crash!) use multiple CPUs.\\
\hline
\end{tabular}
\caption{Command-line options available for \code{JavaKh-v2}.}
\label{fig:commandline}
\end{figure}

\begin{example}
This assumes you have a \code{UNIX}-like environment, and a copy of the \code{JavaKh-v2} files (taken from the \package{KnotTheory} package, for example) in the directory \code{$\sim$/JavaKh-v2}.
\begin{verbatim}
JAVAKHHOME=~/JavaKh-v2
CLASSPATH=$JAVAKHHOME/jars/commons-cli-1.0.jar:
        $JAVAKHHOME/jars/commons-logging-1.1.jar:
        $JAVAKHHOME/jars/commons-io-1.2.jar:
        $JAVAKHHOME/jars/log4j-1.2.12.jar:
        $JAVAKHHOME/bin
java -Xmx28000m -classpath $CLASSPATH org.katlas.JavaKh.JavaKh
        -O --mod 13 -i -C < pd
\end{verbatim}
The first two lines just prepare the \code{Java} classpath. The final line will compute, using \code{28gb} of physical RAM, the Khovanov homology of the diagram in the file \code{pd} (in the \code{PD} notation used by the \package{KnotTheory} package), without reordering any crossings, in the integers mod $13$, giving verbose output and storing intermediate results to disk.
\end{example}

%

\subsection{Extracting the $s$-invariant from a Khovanov polynomial}
\label{sec:extracting-s}
\begin{thm}
\label{thm:decomposition}%
For any knot $K$, there's an integer $s$ and a family of two variable Laurent polynomials $f_k \in \Natural[q^{\pm1}, t^{\pm1}]$ for $k \geq 2$, so that
\begin{equation*}
\Kh{K}{q,t} = q^s (q + q^{-1}) + \sum_{k\geq2} f_k(q,t)(1+q^{2k}t).
\end{equation*}
(Clearly only finitely many of the $f_k$ are nonzero, since they have non-negative coefficients.)
The integer $s$ is the $s$-invariant of the knot $K$.
\end{thm}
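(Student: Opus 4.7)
The plan is to derive the decomposition from Lee's spectral sequence together with Rasmussen's definition of $s$. Work over $\mathbb{Q}$. Lee's perturbation of the Khovanov differential has the form $d_L = d_{\mathrm{Kh}} + \Phi$, where $d_{\mathrm{Kh}}$ preserves the $q$-grading $j$ while $\Phi$ strictly raises $j$ by $4$. Filtering the Lee complex by $q$-grading gives a convergent spectral sequence whose $E_1$ page is the Khovanov homology of $K$, with Poincar\'e polynomial $\Kh{K}{q,t}$, and on which each higher-page differential $d_r$ shifts bidegree $(i,j) = (\text{homological},\ q)$ by exactly $(1, 4r)$. Lee's theorem computes the abutment as $\mathbb{Q}^2$ concentrated in homological degree $0$, with the two surviving generators in $q$-gradings $s-1$ and $s+1$ for an even integer $s = s(K)$ (this being Rasmussen's definition). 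Hence $E_\infty$ has Poincar\'e polynomial $q^{s-1} + q^{s+1} = q^s(q + q^{-1})$.

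The remainder is book-keeping on Poincar\'e polynomials. Let $P_r(q,t)$ denote the Poincar\'e polynomial of $E_r$, so $P_1 = \Kh{K}{q,t}$ and $P_\infty = q^s(q + q^{-1})$. Because we are over a field, choose a linear splitting $E_r = \ker d_r \oplus V_r$ and let $g_r(q,t) \in \Natural[q^{\pm 1}, t^{\pm 1}]$ be the Poincar\'e polynomial of $V_r$. The restriction of $d_r$ to $V_r$ is a bidegree $(1, 4r)$ isomorphism onto $\operatorname{im} d_r \subset \ker d_r$, so $\operatorname{im} d_r$ has Poincar\'e polynomial $q^{4r} t\, g_r(q,t)$, giving
\begin{equation*}
P_r(q,t) - P_{r+1}(q,t) = g_r(q,t)\bigl(1 + q^{4r}t\bigr).
\end{equation*}
Telescoping over $r \geq 1$ yields
\begin{equation*}
\Kh{K}{q,t} = q^s(q + q^{-1}) + \sum_{r \geq 1} g_r(q,t)\bigl(1 + q^{4r}t\bigr),
\end{equation*}
and setting $f_{2r} := g_r$ and $f_{2r+1} := 0$ produces the desired decomposition with $f_k \in \Natural[q^{\pm 1}, t^{\pm 1}]$ for all $k \geq 2$.

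The main obstacle is pinning down the precise bidegree of the higher-page differentials $d_r$. Abstract filtered-complex theory guarantees only a weak inequality on the $q$-shift, and one has to use the explicit zig-zag construction in the Lee filtration to verify that the shift is exactly $4r$, rather than some larger multiple. Once this bidegree claim is controlled, the non-negativity of $g_r$ is automatic (it counts the dimension of a subspace of $E_r$ in each bidegree), Lee and Rasmussen supply the $E_\infty$ computation, and the telescoping identity above delivers the claimed expression. Note that the theorem only asserts \emph{existence} of such a decomposition; uniqueness (and hence a constructive extraction of $s$ from $\Kh{K}{q,t}$) is addressed separately in the sections that follow.
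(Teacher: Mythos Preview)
Your argument is correct and complete as a proof of the theorem, but it is not the route the paper has in mind. The paper does not give a self-contained proof; it instead points to the Bar-Natan ``universal'' theory (the cut-open knot invariant over a ground ring in which the closed genus-$3$ surface is a formal parameter). In that setting the complex decomposes, up to homotopy, into free rank-one pieces and two-term complexes with differential a power of the parameter; the free pieces yield $q^s(q+q^{-1})$ and each two-term piece yields a factor $(1+q^{2k}t)$. Your Lee spectral sequence argument reaches the same conclusion by a more elementary route, and in fact proves something a little sharper: once you verify (via the zig-zag, exactly as you flag) that the only possibly nonzero pages are those with $q$-shift a multiple of $4$, the telescoping identity produces a decomposition with $f_k=0$ for all odd $k$. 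This is consistent with the paper's Conjecture about only $f_2$ surviving, and explains why in the $(7,6)$ torus-knot example the decompositions involving $(1+q^6t)$ are ``accidental'' rather than canonical. One small point worth making explicit in your write-up: the claim ``$d_r$ shifts $q$ by exactly $4r$'' is not literally true for the naive filtration by $q$-degree (where $d_r$ shifts by $r$); it becomes true after observing that the Lee complex splits along $q\bmod 4$ and re-indexing the filtration accordingly, which is exactly the content of the verification you allude to.
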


Although this theorem hasn't appeared in the literature in this form, it follows from the discussion at the end of \cite{MR2232858}, or by thinking about the invariant of a cut-open knot in the variation of Bar-Natan's formalism \cite{MR2174270} for which the genus $3$ surface is a formal parameter. The function \code{UniversalKh} in the \code{KnotTheory`} package computes the invariants $f_k$. For us,
the point of this theorem is that it's often (and perhaps always) possible to extract the $s$-invariant of a knot knowing nothing more than the
graded dimensions of the Khovanov homology. That is, given the two-variable polynomial, we can often show that for all possible decompositions of the form in the theorem, the same value of $s$ appears.

\begin{conj}
\label{conj:only-f2}
In fact, for any knot $K$, only the polynomial $f_2$ is nonzero.
\end{conj}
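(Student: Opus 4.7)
The plan is to interpret the decomposition of Theorem~\ref{thm:decomposition} through the universal Khovanov complex in Bar-Natan's cobordism formalism \cite{MR2174270}, working over a suitable coefficient ring (such as $\mathbb{Q}[H]$) so that the chain complex $BN(K)$ associated to a knot decomposes, up to homotopy, into a direct sum of indecomposable summands. Under this decomposition, the factor $q^s(q+q^{-1})$ corresponds to a single ``free'' summand supporting the two classes at $E_\infty$ of the Lee spectral sequence at bigradings $(q^{s\pm 1}, t^0)$, while each factor $(1+q^{2k}t)$ corresponds to a ``knight-move'' summand: a two-term complex with a $q$-grading jump of $2k$ and an $h$-grading jump of $1$, whose differential becomes an isomorphism after inverting $H$. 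The conjecture then reads: only knight-move summands of length $k=2$ appear, or equivalently, the Lee spectral sequence collapses at $E_2$ for every knot $K$.

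My first step would be to make this interpretation rigorous and verify that it recovers the $f_k$ of Theorem~\ref{thm:decomposition}, which is essentially the content of the discussion at the end of \cite{MR2232858}. Second, I would attempt to rule out knight moves of length $k \geq 3$ using the Lee spectral sequence. The Lee perturbation of the Khovanov differential is a chain map of bidegree $(q^4, t)$, so any cancellation at the $E_1$ page produces a knight of length $2$ directly; knights of length $k \geq 3$ can arise only from higher Lee differentials $d_r$ with $r \geq 2$. I would try to argue that these higher differentials vanish on $E_2$: the explicit canonical-orientation generators of Lee homology pin the filtration degrees of the two surviving classes to $s \pm 1$, and the multiplicative structure (together with a bigrading-balance argument on the Euler characteristic) may force any nonzero $d_r$ with $r \geq 2$ to either produce a third surviving class, contradicting $Lee(K) = \mathbb{Q}^2$, or to be a coboundary for some lower differential.

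The hard step will be the vanishing of higher Lee differentials. These are not defined by an explicit chain-level operation but only through the filtration, so controlling them requires either a careful bookkeeping of the filtered Bar-Natan homotopy type, or input from an equivalent reformulation -- for example, bounding a suitable $\mathbb{Z}/2$-torsion order of $Kh(K;\mathbb{Z})$, since higher knight moves and higher-order torsion are known to be tightly linked. A sensible order of attack is to first verify the conjecture for alternating knots, where $Kh$-thinness makes the decomposition transparent, then extend to quasi-alternating knots via the skein exact triangle, and finally seek a general inductive argument using Bar-Natan's planar-algebra structure. Consistency checks on small knots using the $\mathtt{UniversalKh}$ function in \texttt{KnotTheory} are straightforward, and the paper's explicit decomposition of $Kh(K_2)$ in the next section provides another such data point.
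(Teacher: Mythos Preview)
The statement you are trying to prove is labelled a \emph{conjecture} in the paper, and the paper offers no proof of it whatsoever: the authors simply record computational evidence (Shumakovitch's tables, their own examples) and move on. So there is no ``paper's own proof'' to compare your proposal against.

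More importantly, your proposal has a genuine gap at exactly the point you flag as ``the hard step.'' The assertion that the Lee spectral sequence collapses at $E_2$ is not a lemma on the way to the conjecture; it \emph{is} the conjecture, restated. Your suggested mechanism for ruling out a nonzero $d_r$ with $r\ge 2$ --- that it would ``produce a third surviving class, contradicting $Lee(K)=\mathbb{Q}^2$'' --- is not correct: a higher differential \emph{kills} a pair of classes rather than creating one, and the rank of $E_\infty$ is fixed at $2$ regardless of how many pages the spectral sequence takes to degenerate. The bigrading/Euler-characteristic balance argument likewise cannot see the length of knight moves, since $(1+q^{2k}t)$ contributes zero to the Euler characteristic for every $k$. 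Your inductive program (alternating $\Rightarrow$ quasi-alternating $\Rightarrow$ general) stalls at the last arrow: the skein triangle does not propagate $E_2$-collapse to arbitrary knots.

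In fact this conjecture is now known to be \emph{false}: Manolescu and Marengon have constructed knots for which the Lee spectral sequence has a nonzero differential on a page beyond $E_1$, so some $f_k$ with $k>2$ is nonzero. Any purported proof must therefore break somewhere, and in your outline it breaks at the unjustified vanishing of the higher $d_r$.
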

\begin{rem}
This \emph{isn't} the same as \cite[Conjecture 3.9]{MR2034399}, which was proved in \cite{MR2232858}.

Extensive computations by Shumakovitch \cite{math.GT/0405474, MR2384833}, the authors and others support this conjecture, and it also holds for our examples (see below). It's easy to see that decompositions with only $f_2$ nonzero are unique.
\end{rem}

An amusing and straightforward corollary of this conjecture is an easy formula for the $s$-invariant:
\begin{equation*}
q^{s(K)} = \frac{\Kh{K}{q, -q^{-4}}}{q+q^{-1}}
\end{equation*}


Finding all possible decompositions as in Theorem \ref{thm:decomposition} can become quite difficult! The smallest example we know of where the decomposition
is not unique is the $(7,6)$ torus knot.\footnote{It's unique for all knots with at most 14 crossings, for example.} Here there are four possible decompositions (all giving the same value of the $s$-invariant):
\begin{align*}
\operatorname{Kh}(T(7,6))(q,t)
    &  = (q+q^{-1}) + (p_{\star,4}(q,t) + p_{A,4}(q,t))(1 + q^4 t) + p_{A,6}(q,t)(1+q^6t) \\
    & = (q+q^{-1}) + (p_{\star,4}(q,t) + p_{B,4}(q,t))(1 + q^4 t) + p_{B,6}(q,t)(1+q^6t) \\
    & = (q+q^{-1}) + (p_{\star,4}(q,t) + p_{C,4}(q,t))(1 + q^4 t) + p_{C,6}(q,t)(1+q^6t) \\
    & = (q+q^{-1}) + (p_{\star,4}(q,t) + p_{D,4}(q,t))(1 + q^4 t) + p_{D,6}(q,t)(1+q^6t)
\end{align*}
where
\begin{align*}
p_{A,4}(q,t) & = t^{15} q^{53}+t^{14} q^{47}+t^{11} q^{47}+t^{10} q^{41} & p_{A,6}(q,t) & = 0 \\
p_{B,4}(q,t) & = t^{15} q^{53}+t^{14} q^{47} & p_{B,6}(q,t) & = t^{11} q^{45} + t^{10} q^{41} \\
p_{C,4}(q,t) & = t^{11} q^{47}+t^{10} q^{41} & p_{C,6}(q,t) & = t^{15} q^{51} + t^{14} q^{47} \\
p_{D,4}(q,t) & = 0 & p_{D,6}(q,t) & = t^{15} q^{51} + t^{14} q^{47} + t^{11} q^{45} + t^{10} q^{41} \\
\end{align*}
and
\begin{align*}
p_{\star,4}(q,t) & =
    t^{18} q^{53}+t^{16} q^{51}+t^{15} q^{51}+t^{16} q^{49}+t^{14} q^{49}+t^{13} q^{49}+t^{12} q^{47}+2 t^{12} q^{45} \\
 &  +2 t^{10} q^{43}+2 t^8 q^{41}+t^8 q^{39}+t^6 q^{39}+t^6 q^{37}+t^4 q^{37}+t^4 q^{35}+t^2 q^{33}
\end{align*}

Note that for all decompositions we have $s=0$, and that the first decomposition is consistent with the conjecture. We don't
know of any examples where different values of $s$ occur in different decompositions, although this
is certainly possible for arbitrary polynomials in $\Natural[q^{\pm1},t^{\pm 1}]$. For example
\begin{align*}
q^3 + q + q^{-1} + q^7 t 	& = q^0 (q + q^{-1}) +  q^3 (1+q^4 t) \\
					& = q^2 (q+ q^{-1}) + q^{-1} (1+q^8 t),
\end{align*}
has decompositions with either $s=0$ or $s=2$.

For the knots we're interested in, we need a trick to make the task of finding all decompositions manageable. We'll use the following one.

We'll say a Laurent polynomial $z(q,t)$ has a $1$-decomposition if it can be written as in Theorem \ref{thm:decomposition}, and it has a $0$-decomposition
if it can be written that way, but without the initial $q^s(q+q^{-1})$ term.

Clearly, if $u(q,t)$ has a $1$-decomposition and $v(q,t)$ has a $0$-decomposition, then $u(q,t)+v(q,t)$ has a $1$-decomposition.
\begin{lem}
\label{lem:cutting-decompositions}
Any $1$-decomposition for $\Kh{K}{q,t} = \sum_{j,r} a_{j,r} q^j t^r$ arises in this way from a $1$-decomposition for $u(q,t)$, and a $0$-decomposition for $v(q,t)$, where
\begin{align*}
u(q,t) & = \sum_{\substack{j \in \Integer\\r\geq 0}} a_{j,r} q^j t^r + t^{-1} w(q) \\
v(q,t) & = \sum_{\substack{j \in \Integer\\r < 0}} a_{j,r} q^j t^r - t^{-1} w(q)
\end{align*}
for some Laurent polynomial $w\in \Natural[q^{\pm 1}]$, so $v(q,t) \in t^{-1}\Natural[q^{\pm1},t^{-1}]$.
\end{lem}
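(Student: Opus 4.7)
The strategy is to extract $u$, $v$, and $w$ directly from an arbitrary $1$-decomposition of $\Kh{K}{q,t}$ by splitting each auxiliary polynomial $f_k$ at the $t^{-1}/t^0$ boundary. The key geometric fact is that multiplication by $(1+q^{2k}t)$ sends a monomial of $t$-degree $r$ to a sum of monomials of $t$-degrees $r$ and $r+1$, so only monomials of $f_k$ at $t$-degree exactly $-1$ can contribute to both the negative and the non-negative $t$-degree parts of $\Kh{K}{q,t}$ simultaneously.

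Suppose we are given a $1$-decomposition
$$\Kh{K}{q,t} = q^s(q+q^{-1}) + \sum_{k\geq 2} f_k(q,t)(1+q^{2k}t), \qquad f_k \in \Natural[q^{\pm 1}, t^{\pm 1}].$$
For each $k \geq 2$, I would split $f_k(q,t) = f_k^+(q,t) + t^{-1} g_k(q) + f_k^-(q,t)$, where $f_k^+$ is the sum of the monomials of $t$-exponent $\geq 0$, $g_k(q) \in \Natural[q^{\pm 1}]$ is the coefficient of $t^{-1}$ in $f_k$, and $f_k^-$ is the sum of the monomials of $t$-exponent $\leq -2$. Because we are only selecting monomials, each piece inherits non-negativity from $f_k$. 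Then I set $w(q) := \sum_{k\geq 2} g_k(q) \in \Natural[q^{\pm 1}]$ and define
\begin{align*}
u(q,t) &:= q^s(q+q^{-1}) + \sum_{k\geq 2} \bigl(f_k^+(q,t) + t^{-1} g_k(q)\bigr)(1+q^{2k}t), \\
v(q,t) &:= \sum_{k\geq 2} f_k^-(q,t)(1+q^{2k}t).
\end{align*}
By construction, $u$ admits a $1$-decomposition (with non-negative coefficients $f_k^+ + t^{-1} g_k$), $v$ admits a $0$-decomposition, and $u+v = \Kh{K}{q,t}$.

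Finally, I would verify that $u$ and $v$ have the exact shapes claimed in the lemma. The term $f_k^+(q,t)(1+q^{2k}t)$ is supported in $t$-degrees $\geq 0$, the term $f_k^-(q,t)(1+q^{2k}t)$ is supported in $t$-degrees $\leq -1$, and the middle term expands as $t^{-1}g_k(q)(1+q^{2k}t) = t^{-1}g_k(q) + q^{2k}g_k(q)$, contributing only to $t$-degrees $-1$ and $0$. Reading off the $t$-degree $<0$ part of $u$ yields precisely $\sum_{k\geq 2} t^{-1} g_k(q) = t^{-1} w(q)$, and $v$ contributes nothing to $t$-degree $\geq 0$. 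Hence the non-negative-$t$-degree part of $\Kh{K}{q,t}$ equals $u(q,t) - t^{-1} w(q)$, and the negative-$t$-degree part equals $v(q,t) + t^{-1} w(q)$, which are the stated formulas.

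The lemma is essentially bookkeeping, so I do not expect a real obstacle; the only point that needs care is ensuring that $g_k$ has non-negative integer coefficients (so that $w \in \Natural[q^{\pm 1}]$ and $v \in t^{-1} \Natural[q^{\pm 1}, t^{-1}]$), which is forced by the hypothesis that the given $f_k$ lie in $\Natural[q^{\pm 1}, t^{\pm 1}]$.
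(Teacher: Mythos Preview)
Your proof is correct and is precisely the detailed unpacking of the paper's one-line justification, which merely says ``just observe that the polynomials $(1+q^{2k} t)$ only span $2$ different $t$ degrees.'' Your splitting of each $f_k$ into $f_k^+ + t^{-1}g_k + f_k^-$ is exactly how one makes that observation effective.
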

\begin{rem}
You could think of this as `cutting $\Kh{K}{q,t}$ into two pieces, along the $t^{-1}$ line',
and the lemma as a statement about `fibered products' of decompositions.
We could cut elsewhere\footnote{Replace the inequalities in the
index of the summation with $r \geq k \geq 0$ and $r < k$ for $u$ and $v$ respectively, and similarly the seconds terms with $+t^{k-1} w(q)$ and $-t^{k-1} w(q)$.}, but we'll only use this case.
The proof is easy; just observe that the polynomials $(1+q^{2k} t)$ only span $2$ different $t$ degrees.
\end{rem}

\subsection{Results}
\label{sec:results}
Computing the two-variable polynomial for $K_2$ took approximately $4$ weeks on a dual core AMD Opteron 285 with 32 gb of RAM. At this point, we haven't been able to do the calculation for $K_3$. With the current version of the program, after about two weeks the program runs out of memory and aborts.

Here's the polynomial for $K_2$:

\begin{align}
\operatorname{Kh}(K_2)&(q,t) =
        q^{-45} t^{-32} + q^{-41} t^{-31} + q^{-39} t^{-29} + q^{-35} t^{-28} + q^{-37} t^{-27} + q^{-37} t^{-26} \notag \\
    & + q^{-33} t^{-26} + q^{-35} t^{-25} + q^{-33} t^{-25} + q^{-35} t^{-24} + 2 q^{-31} t^{-24} + q^{-33} t^{-23} \notag \\
    & + 2 q^{-31} t^{-23} + q^{-27} t^{-23} + q^{-33} t^{-22} + 2 q^{-29} t^{-22} + q^{-27} t^{-22} + q^{-31} t^{-21} \notag \displaybreak[1]  \\
    & + 3 q^{-29} t^{-21} + q^{-25} t^{-21} + q^{-31} t^{-20} + 3 q^{-27} t^{-20} + 2 q^{-25} t^{-20} + 4 q^{-27} t^{-19} \notag \\
    & + 2 q^{-23} t^{-19} + q^{-27} t^{-18} + 2 q^{-25} t^{-18} + 4 q^{-23} t^{-18} + 4 q^{-25} t^{-17} + q^{-23} t^{-17} \notag \\
    & + 3 q^{-21} t^{-17} + q^{-19} t^{-17} + 4 q^{-25} t^{-16} + 2 q^{-23} t^{-16} + 6 q^{-21} t^{-16} + q^{-17} t^{-16} \notag \displaybreak[1]  \\
    & + 4 q^{-23} t^{-15} + 5 q^{-21} t^{-15} + 3 q^{-19} t^{-15} + 2 q^{-17} t^{-15} + q^{-23} t^{-14} + q^{-21} t^{-14} \notag \\
    & + 8 q^{-19} t^{-14} + q^{-17} t^{-14} + q^{-15} t^{-14} + 3 q^{-21} t^{-13} + 6q^{-19} t^{-13} + 3 q^{-17} t^{-13} \notag \\
    & + 4 q^{-15} t^{-13} + q^{-21}t^{-12} + 2 q^{-19} t^{-12} + 9 q^{-17} t^{-12} + 5 q^{-15} t^{-12} +2 q^{-13} t^{-12} \notag \displaybreak[1]  \\
    & + 7 q^{-17} t^{-11} + 4 q^{-15} t^{-11} + 7 q^{-13}t^{-11} + 3 q^{-17} t^{-10} + 7 q^{-15} t^{-10} + 7 q^{-13} t^{-10} \notag \\
    & +2 q^{-11} t^{-10} + q^{-9} t^{-10} + 8 q^{-15} t^{-9} + 6 q^{-13}t^{-9} + 9 q^{-11} t^{-9} + q^{-9} t^{-9} \notag \\
    & + 3 q^{-15} t^{-8} + 5q^{-13} t^{-8} + 13 q^{-11} t^{-8} + 4 q^{-9} t^{-8} + 2 q^{-7}t^{-8} + 5 q^{-13} t^{-7} \notag \displaybreak[1]  \\
    & + 8 q^{-11} t^{-7} + 9 q^{-9} t^{-7} + 5q^{-7} t^{-7} + q^{-5} t^{-7} + 5 q^{-11} t^{-6} + 13 q^{-9} t^{-6} \notag \\
    & +6 q^{-7} t^{-6} + 4 q^{-5} t^{-6} + q^{-11} t^{-5} + 8 q^{-9} t^{-5}+ 11 q^{-7} t^{-5} + 8 q^{-5} t^{-5} \notag \\
    & + q^{-3} t^{-5} + 2 q^{-9}t^{-4} + 12 q^{-7} t^{-4} + 10 q^{-5} t^{-4} + 6 q^{-3} t^{-4} + 7q^{-7} t^{-3} \notag \displaybreak[1]  \\
    & + 9 q^{-5} t^{-3} + 12 q^{-3} t^{-3} + 2 q^{-1} t^{-3}+ 9 q^{-5} t^{-2} + 12 q^{-3} t^{-2} + 8 q^{-1} t^{-2} \notag \\
    & + q^{1} t^{-2}+ 3 q^{-5} t^{-1} + 7 q^{-3} t^{-1} + 15 q^{-1} t^{-1} + 5 q^{1}t^{-1} + q^{3} t^{-1} +3 q^{-3} t^{0} \notag \\
    & + 14 q^{-1} t^{0} + 10 q^{1} t^{0} + 6 q^{3} t^{0} +q^{-3} t^{1} + 5 q^{-1} t^{1} + 11 q^{1}t^{1} + 10 q^{3} t^{1} \notag \displaybreak[1]  \\
    & + 2 q^{5} t^{1} + q^{-1} t^{2} + 8 q^{1} t^{2}+ 10 q^{3} t^{2} + 8 q^{5} t^{2} + 2 q^{1} t^{3} + 7 q^{3} t^{3} + 10q^{5} t^{3} \notag \\
    & + 5 q^{7} t^{3} + 4 q^{3} t^{4} + 7 q^{5} t^{4} + 6 q^{7}t^{4} + 3 q^{9} t^{4} + q^{3} t^{5} +5 q^{9} t^{5} + 2 q^{5} t^{6} + 5 q^{7} t^{6} \notag \\
    &  + 7 q^{9} t^{6} + 4q^{11} t^{6} + 4 q^{5} t^{5} + 8 q^{7} t^{5} + q^{7} t^{7} + 5 q^{9} t^{7} + 4 q^{11} t^{7} + 3q^{13} t^{7} \notag \displaybreak[1]  \\
    &  + 2 q^{9} t^{8} + 4 q^{11} t^{8} + 3 q^{13} t^{8} + 3q^{11} t^{9} + 4 q^{13} t^{9} + 3 q^{15} t^{9} + q^{11} t^{10} +q^{13} t^{10} \notag \\
    &  + 3 q^{15} t^{10} + 2 q^{17} t^{10} + q^{13} t^{11} + 2q^{15} t^{11} + q^{17} t^{11} + q^{13} t^{12} + 2 q^{17} t^{12} +q^{19} t^{12} \notag \\
    &  + 2 q^{17} t^{13} + q^{21} t^{13} + q^{17} t^{14} +q^{19} t^{14} + q^{21} t^{14} + q^{19} t^{15} + q^{21} t^{15} +q^{23} t^{15} \notag \\
    &  + q^{23} t^{16} + q^{23} t^{17} + q^{27} t^{18} \label{eq:KhK2} 
\end{align}

The coefficients of this polynomial are shown in tabular form in Figure \ref{fig:coefficients}.

We'll now apply Lemma \ref{lem:cutting-decompositions} to extract the $s$-invariant. We first observe that the coefficient of $t^{-1}$ in $\Kh{K_2}{q,t}$
is $3 q^{-5} + 7 q^{-3} + 15 q^{-1} + 5 q^{1} + q^{3}$. The polynomial $w(q)$ in the lemma must have coefficients no greater than these.
In fact, it must have terms $3 q^{-5} + 7 q^{-3}$; it's easy to see from Figure \ref{fig:coefficients} that these terms can not be part of
any $0$-decomposition of $v(q,t)$. Moreover, the terms $5 q^{1} + q^{3}$ can not be part of $w(q)$, since they can not be part of any $1$-decomposition
of $u(q,t)$. Thus we need only consider $w(q) = 3 q^{-5} + 7 q^{-3} + k q^{-1}$ for some $0 \leq k \leq 15$. Happily, finding $1$-decompositions
of the resulting $u(q,t)$ is easily tractable by computer, or painful-but-tractable by hand. We find that there are exactly $30$ such decompositions,
all with $k=6$. In all $30$ cases, we have $s=0$, and so, rather sadly, this must also be the $s$-invariant of the knot $K_2$.

Finding $0$-decompositions of $v(q,t)$ seems to be intractable directly, so we can't even tell you how many decompositions there are of the whole
polynomial! Presumably one could `cut' $v(q,t)$ and apply the Lemma again. Nevertheless, it's easy to see that there's a decomposition satisfying the conjecture \ref{conj:only-f2};
just divide $\Kh{K_2}{q,t} - (q+q^{-1})$ by $(1+q^4 t)$.

\begin{figure}[ht]
\begin{center}
\resizebox{\textwidth}{!}{
\begin{tabular}{|c||c|c|c|c|c|c|c|c|c|c|c|c|c|c|c|c|c|c|c|c|c|c|c|c|c|c|c|c|c|c|c|c|c|c|c|c|c|c|c|c|c|c|c|c|c|c|c|c|c|c|c|c|c|c|c|c|c|c|c|c|c|c|c|c|c|c|}
\hline
\input{sections/table-K2}
\end{tabular}
}
\end{center}
\caption{The coefficient of $q^{\ell+2r} t^r$ in $\Kh{K_2}{q,t}$.}
\label{fig:coefficients}
\end{figure}

Finally, the knot considered in \S\ref{sec:minus1} coming from the $m=-1$ case is small enough that we can use the function \code{UniversalKh} to compute the $s$-invariant directly. We find that
\begin{align*}
Kh(K^{m=-1})(q,t) &=q^{-23} t^{-14}+q^{-19} t^{-13}+q^{-19} t^{-12}+q^{-17} t^{-11}+q^{-15} t^{-11}+\displaybreak[1]\\&q^{-17} t^{-10}+q^{-13} t^{-10}+2 q^{-13} t^{-9}+q^{-15} t^{-8}+q^{-9} t^{-8}+2 q^{-11} t^{-7}+\displaybreak[1]\\&q^{-13} t^{-6}+q^{-11} t^{-6}+q^{-7} t^{-6}+3 q^{-9} t^{-5}+q^{-7} t^{-5}+2 q^{-9} t^{-4}+\displaybreak[1]\\&2 q^{-5} t^{-4}+2 q^{-7} t^{-3}+3 q^{-5} t^{-3}+q^{-7} t^{-2}+q^{-5} t^{-2}+3 q^{-3} t^{-2}+\displaybreak[1]\\&q^{-1} t^{-2}+4 q^{-3} t^{-1}+q^{-1} t^{-1}+t^{-1} q+q^{-3}+2 q^{-1}+4 q+q^{-1} t+\displaybreak[1]\\&2 q t+q^{3} t+2 q^{3} t^{2}+q^{5} t^{2}+q^{3} t^{3}+q^{7} t^{3}+q^{7} t^{4} \\
\intertext{with}
s & = 0 \\
f_2 & = q^{-23} t^{-14}+q^{-19} t^{-12}+q^{-17} t^{-11}+q^{-17} 
t^{-10}+q^{-13} t^{-9}+\displaybreak[1]\\&q^{-15} t^{-8}+q^{-11} t^{-7}+q^{-13} 
t^{-6}+q^{-11} t^{-6}+2 q^{-9} t^{-5}+2 q^{-9} t^{-4}+\displaybreak[1]\\&2 q^{-7} 
t^{-3}+q^{-5} t^{-3}+q^{-7} t^{-2}+q^{-5} t^{-2}+q^{-3} t^{-2}+3 
q^{-3} t^{-1}+q^{-3}+\\&q^{-1}+q^{-1} t+q t+q^{3} t^{2}+q^{3} t^{3}
\end{align*}
and all other $f_k = 0$.

\subsection{Hyperbolic volume and homology}
In \cite{Dunfield-volumes} Dunfield noticed a correlation between the hyperbolic volume of a knot and the absolute value of its determinant. Unfortunately it doesn't work nearly as well for non-alternating knots as for alternating knots. In \cite{MR2034399}, based on the first computations of Khovanov homology available, Khovanov noticed that the correlation is even better between the hyperbolic value and the rank of the homology groups. Indeed, for alternating knots it is now known that $\abs{\det{K}} = \rank(\KhH{K}) - 1$, and that $\abs{\det{K}} \leq \rank(\KhH{K}) - 1$ for all knots. These correlations were noticed by looking at knots up to either $13$ (for the determinant) or $11$ (for the rank) crossings. Having just completed calculating the Khovanov homology calculation of a relatively huge knot, it's interesting to check this correlation. Unfortunately, it appears to fail. Substituting $q=1$, $t=1$ into $\Kh{K_2}{q,t}$ from Equation \eqref{eq:KhK2} shows the rank is $650$, and a calculation using Dunfield's {\tt{SnapPeaPython}} shows the hyperbolic volume is approximately $17.2879...$. Figure \ref{fig:volumes} shows a plot comparing $\log(\rank(\KhH{K}))$ and $\operatorname{volume}(K)$ for a random sample of non-alternating knots of $11$, $12$, $13$ and $14$ crossings (blue, red, green and black points, respectively). The extra point, well outside the obvious cluster, is the corresponding data for $K_2$. Perhaps the observed correlation is an artifact of sampling knots by crossing number?

\begin{figure}[!ht]
\labellist \Large\hair 2pt

  \pinlabel $K_2$ at 236 200
  \pinlabel $\operatorname{volume}(K)$ at 190 8
  \pinlabel \rotatebox{90}{$\log(\rank(\KhH{K}))$} at 2 130

\endlabellist
\centering
\includegraphics[scale=1.2]{\pathtodiagrams volumes}
\caption{Volumes and ranks of homology groups for a sample of $11$-$14$ crossing knots (shown as blue, red, green and black points), and also for $K_2$.}\label{fig:volumes}
\end{figure}

\newcommand{\urlprefix}{}
\bibliographystyle{gtart}
\bibliography{bibliography/bibliography}

This paper is available online at \arxiv{0906.5177}, and at
\url{http://tqft.net/SPC4}.

\end{document}